\ifpdf\usepackage[pdftex]{hyperref}
\else\usepackage[hypertex]{hyperref}\fi
\newtheorem{thm}{Theorem}[section]
\newtheorem{pro}[thm]{Proposition}
\newtheorem{dfi}[thm]{Definition}
\newtheorem{cor}[thm]{Corollary}
\newtheorem{rem}[thm]{Remark}
\newtheorem{lem}[thm]{Lemma}
\newcommand{\R}{\mathbb{R}}
\newcommand{\C}{\mathbb{C}}
\newcommand{\N}{\mathbb{N}}
\newcommand{\im}{\mathop{\mathrm{Im}}}
\def\eps{\epsilon}
\def\S{\mathcal{S}}
\def\F{\mathcal{F}}
\def\Fi{\mathcal{F}^{-1}}
\begin{document}

\title[On the $\ell^{q,p}$ cohomology of Carnot groups]{On the
  $\ell^{q,p}$ cohomology of Carnot groups}

\author{Pierre Pansu and Michel Rumin} \address{\noindent Laboratoire
  de Math\'ematiques d'Orsay \\\noindent Universit\'e Paris-Sud, CNRS
  \\\noindent Universit\'e Paris-Saclay, 91405 Orsay, France.}

\email{pierre.pansu@math.u-psud.fr and michel.rumin@math.u-psud.fr}

\date{\today}

\begin{abstract}
  We study the simplicial $\ell^{q,p}$ cohomology of Carnot groups
  $G$. We show vanishing and non-vanishing results depending of the
  range of the $(p,q)$ gap with respect to the weight gaps in the Lie
  algebra cohomology of $G$.
\end{abstract}

\keywords{$\ell^{q,p}$ cohomology, Carnot groups, global analysis on
  manifolds}


\thanks{Both authors supported in part by MAnET Marie Curie Initial
  Training Network. P.P. supported by Agence Nationale de la
  Recherche, ANR-15-CE40-0018.}

\maketitle


\section{Introduction}

\subsection{$\ell^{q,p}$ cohomology}

Let $T$ be a countable simplicial complex. Given
$1\leq p\leq q\leq +\infty$, the $\ell^{q,p}$ cohomology of $T$ is the
quotient of the space of $\ell^p$ simplicial cocycles by the image of
$\ell^q$ simplicial cochains by the coboundary $d$,
\begin{eqnarray*}
  \ell^{q,p}H^k(T)=(\ell^p C^k(T)\cap \ker d)/d(\ell^q
  C^{k-1}(T)) \cap \ell^p C^k(T) . 
\end{eqnarray*}
It is a quasiisometry invariant of bounded geometry simplicial
complexes whose usual cohomology vanishes in a uniform manner, see
\cite{Bourdon-Pajot,Elek,Genton,Gromov,Pansu2}. Riemannian manifolds
$M$ with bounded geometry admit quasiisometric simplicial complexes (a
construction is provided below, in Section \ref{sec:leray}). Uniform
vanishing of cohomology passes through. Therefore one can take the
$\ell^{q,p}$ cohomology of any such complex as a definition of the
$\ell^{q,p}$ cohomology of $M$.

One should think of $\ell^{q,p}$ cohomology as a (large scale)
topological invariant. It has been useful in several contexts, mainly
for the class of hyperbolic groups where the relevant value of $q$ is
$q=p$, see \cite{Bourdon-Kleiner,Bourdon-Pajot,
  Cornulier-Tessera,Drutu-Mackay} for instance. It is interesting to
study a class of spaces where values of $q\not=p$ play a significant
role. The goal of the present paper is to compute $\ell^{q,p}$
cohomology, to some extent, for certain Carnot groups. Even the case
of abelian groups is not straightforward.

\subsection{Carnot groups}

Let $G$ be a Carnot group, i.e. a simply connected real Lie group
whose Lie algebra $\mathfrak{g}$ is equipped with a derivation whose
invariant vectors generate $\mathfrak{g}$. The derivation defines
gradations, called weight, on $\mathfrak{g}$ and
$\Lambda^\cdot\mathfrak{g}^*$. The cohomology of $\mathfrak{g}$ is
graded by degree and weight,
\begin{eqnarray*}
  H^\cdot(\mathfrak{g})=\bigoplus_{k,w}H^{k,w}(\mathfrak{g}).
\end{eqnarray*}
For $k=0,\ldots,\mathrm{dim}(\mathfrak{g})$, let $w_{min}(k)$
(resp. $w_{max}(k)$) be the smallest (resp. the largest) weight $w$
such that $H^{k,w}(\mathfrak{g})\not=0$.

\subsection{Main result}

\begin{thm}\label{1}
  Let $G$ be a Carnot group of dimension $n$ and of homogeneous
  dimension $Q$. Let $k=1,\ldots,n$. Denote by
  \begin{eqnarray*}
    \delta N_{max}(k)=w_{max}(k)-w_{min}(k-1),\quad \delta
    N_{min}(k)=\max\{1,w_{min}(k)-w_{max}(k-1)\}. 
  \end{eqnarray*}
  Let $p$ and $q$ be real numbers.
  \begin{enumerate}[label=(\roman*)]
  \item If
    \begin{eqnarray*}
      1<p,q<\infty \quad \text{and}\quad
      \frac{1}{p}-\frac{1}{q}\geq\frac{\delta N_{max}(k)}{Q}.  
    \end{eqnarray*} 
    Then the $\ell^{q,p}$ cohomology in degree $k$ of $G$ vanishes.
  \item If
    \begin{eqnarray*}
      1\leq p,q\leq\infty \quad \text{and}\quad
      \frac{1}{p}-\frac{1}{q}<\frac{\delta N_{min}(k)}{Q}, 
    \end{eqnarray*} 
    then the $\ell^{q,p}$ cohomology in degree $k$ of $G$ does not
    vanish.
  \end{enumerate}
\end{thm}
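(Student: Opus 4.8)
The strategy is to reduce the simplicial $\ell^{q,p}$ cohomology of $G$ to the $L^{q,p}$ cohomology of differential forms on the Riemannian manifold underlying $G$, and then to analyze the latter using the spectral-sequence / Rumin complex machinery adapted to the Carnot grading. A Leray-type argument (indicated in the reference to Section~\ref{sec:leray}) produces a bounded-geometry simplicial complex $T$ quasiisometric to $G$ with an isomorphism $\ell^{q,p}H^k(T)\cong L^{q,p}H^k(G)$, so it suffices to work with forms. I would then replace the de~Rham complex by the Rumin complex $(E_0^\cdot,d_c)$, whose spaces in degree $k$ carry weights between $w_{min}(k)$ and $w_{max}(k)$, and whose differential $d_c$ in degree $k$ has homogeneity (weight increase) exactly controlled by the gaps $\delta N(k)$; the point of introducing $d_c$ is that it behaves well under the anisotropic dilations $\delta_t$ of $G$.

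For part (i), the vanishing statement, the key input is a \emph{homotopy formula with gain}: one constructs operators (smoothing/potential operators built from the sub-Laplacian of the Rumin complex, as in Rumin's and the authors' earlier work) $P,K$ on $E_0^k$ with $1 = d_cK + Kd_c + P$, where $P$ is smoothing and, crucially, $K$ maps $L^p$ forms of weight $w_{max}(k)$ into $L^q$ forms of weight $w_{min}(k-1)$ whenever $\frac1p-\frac1q$ is at least the homogeneous-degree drop $\delta N_{max}(k)/Q$ — this is exactly a Sobolev-type inequality on $G$ for the relevant homogeneous pseudodifferential operator, where $1/Q$ is the gain per unit of homogeneity. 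Feeding a closed $\ell^p$ cochain (transported to a closed $L^p$ form) through this formula, handling the error term $P$ by the usual local-to-global bootstrap, exhibits it as $d_c$ of an $\ell^q$ cochain, giving vanishing. The main obstacle here is establishing the global (not merely local) $L^p\to L^q$ boundedness of $K$ with the sharp exponent gap, i.e. the global Sobolev inequality on the non-compact group $G$ for operators that are only \emph{homogeneous} of the right degree rather than convolution operators in the usual Euclidean sense; this requires the Carnot-group analogue of the Hardy--Littlewood--Sobolev inequality together with careful weight bookkeeping in the Rumin complex.

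For part (ii), non-vanishing, the plan is to exhibit an explicit nonzero class. Take a closed left-invariant form $\alpha$ of degree $k$ and weight $w$ with $w_{min}(k)\le w\le w_{max}(k)$ representing a nonzero class in $H^{k,w}(\mathfrak g)$, and multiply it by a suitable cutoff to obtain an $L^p$ closed form $\omega$ on $G$. One must show $\omega\neq d\eta$ for any $\eta\in L^q$: if such $\eta$ existed, apply the dilation $\delta_t$ and use the scaling behaviour — $\delta_t^*\omega$ scales like $t^{w}$ relative to the $L^p$ norm scaling $t^{-Q/p}$, while any primitive scales one weight-step lower, so the hypothesis $\frac1p-\frac1q<\delta N_{min}(k)/Q$ forces the candidate primitive to have unbounded $L^q$ norm, a contradiction; more robustly, one pairs $\omega$ against a family of test $(n-k)$-forms and uses Stokes together with the scaling to detect the class, or invokes a duality between $\ell^{q,p}$ cohomology in degree $k$ and $\ell^{q',p'}$ cohomology in degree $n-k$ (Poincaré-type duality for $L^{q,p}$ cohomology) to reduce to a statement about a cocycle that is detected by integration. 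The delicate point is choosing the cutoffs/test forms so that the torsion (non-closed, higher-weight) components introduced by multiplying an invariant form by a non-invariant function do not contribute, which is again handled by the weight filtration of the Rumin complex.
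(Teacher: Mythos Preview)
Your plan for part~(i) is essentially the paper's proof: reduce to Sobolev $L^{q,p}$ cohomology via a Leray argument, retract to the Rumin complex $(E_0,d_c)$, and build a global homotopy $K_c$ whose $L^p\to L^q$ boundedness comes from the Carnot Hardy--Littlewood--Sobolev inequality with gap $\delta N_{max}(k)/Q$. Two points you gloss over: the paper obtains an exact identity $1=d_cK_c+K_cd_c$ (no residual smoothing term $P$), and a genuine technical step is extending $K_c$ from the test space $\mathcal{S}_0$ of Schwartz functions with all vanishing moments to actual Sobolev forms, which needs a separate density argument.

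Your plan for part~(ii), however, has a real gap. A nonzero left-invariant $k$-form $\alpha$ has constant pointwise norm, so it is never in $L^p$ for $p<\infty$; multiplying by a cutoff $\chi$ destroys closedness since $d(\chi\alpha)=d\chi\wedge\alpha\neq 0$. Thus your proposed $\omega$ is simply not a closed $L^p$ form. The scaling argument you sketch next also fails: a hypothetical $L^q$ primitive $\eta$ of $\omega$ has no reason to be homogeneous under $\delta_t$, so the claim that ``any primitive scales one weight-step lower'' has no content.

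The paper's construction is different in kind. One takes $\omega=d\alpha$ where $\alpha$ is a smooth $(k{-}1)$-form on $G\setminus\{1\}$ that is \emph{homogeneous of degree $\lambda$ under the dilations} (not left-invariant), chosen via the Rumin complex so that $d\alpha$ has weight $\geq w_{min}(k)$. Then $\omega$ is automatically closed, and homogeneity with $\lambda<w_{min}(k)-Q/p$ forces $\omega\in L^p$ away from the origin (smooth it there). Detection is by your ``more robust'' route, but now it works: pair $\omega$ against $\omega'_j=\chi_j\alpha'$ for a dual homogeneous $(n{-}k)$-form $\alpha'$ whose differential has weight $\geq Q-w_{max}(k{-}1)$, arranged so that $\int_G\omega\wedge\omega'_j$ is a nonzero constant while $\|d\omega'_j\|_{q'}\to 0$; Stokes then excludes any $L^q$ primitive. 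The Rumin complex is used to manufacture such homogeneous $\alpha,\alpha'$ with the correct weight bounds (this is where $\delta N_{min}(k)$ enters), not to repair cutoff errors on a left-invariant form.
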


The non-vanishing statement has a wider scope, see Theorem
\ref{thm:nonzero}. It holds in particular on more general homogeneous
groups.

Theorem \ref{1} is sharp when both $H^{k-1}(\mathfrak{g})$ and
$H^{k}(\mathfrak{g})$ are concentrated in a single weight. This
happens in all degrees for abelian groups and for Heisenberg groups,
for instance. This happens for all Carnot groups in degrees $1$ and
$n$:
$\delta N_{max}(1)=\delta N_{min}(1)=1=\delta N_{max}(n)=\delta
N_{min}(n)$.

Even when not sharp, the result seems of some value as it relates
large scale quasi-isometric analytic invariants of $G$ to its
infinitesimal Lie structure. For instance for $k=2$, the weights on
$H^2(\mathfrak{g})$ can be interpreted as the depth of the relations
defining $G$ with respect to a free Lie group over $\mathfrak{g}_1$,
see e.g. \cite{Rumin_TSG}. The results yield global discrete
Poincar\'e inequalities of type
$\|d^{-1}\omega \|_q \leq C \|\omega\|_p$ on $2$-cocycles, as long as
$1<p, q< +\infty$ and $\frac1p - \frac1q \geq \frac{w_{max}(2)-1}{Q}$,
while there exist $\ell^p$ $2$-cocycles without $\ell^q$ primitive
when $\frac1p - \frac1q < \frac{w_{min}(2)-1}{Q}$.

\smallskip

We shall also illustrate the results on the Engel group in
Section~\ref{sec:engels-group}, and show in particular that, apart
from degrees $1$ and $n$, the natural Carnot homogeneous structure
does not give the best range for the non-vanishing result in general.

\subsection{Method}

We briefly describe the scheme of the proof of Theorem~\ref{1}. The
first step of the vanishing statement is a Leray type lemma which
relates the discrete $\ell^{q,p}$ cohomology to some Sobolev $L^{q,p}$
cohomology of differential forms. This is proved here in the more
general setting of manifolds of bounded geometry of some high
order. One has to take care of an eventual lack of uniformity in the
coverings in order to be able to use local Poincaré inequalities.

A feature of this Sobolev $L^{q,p}$ cohomology is that its forms are
\emph{a priori} quite smooth, and need only be integrated into much
less regular ones. This is because $\ell^p$-cochains at the discrete
level transform into smooth forms made from a smooth partition of
unity, while reversely, less regular $L^p$ forms can still be
discretized into some $\ell^p$ data.

We then focus on Carnot groups. Although they possess dilatations, the
de Rham differential is not homogeneous when seen as a differential
operator.  Nevertheless, as observed in \cite{Rumin_CRAS_1999}, it has
some type of graded hypoellipticity, that can be used to produce
global homotopies $K$. These homotopies are pseudodifferential
operators as studied by Folland and Christ-Geller-G{\l}owacki-Polin in
\cite{CGGP,Folland_1975}. They can be thought of as a kind of
generalized Riesz potentials, like $\Delta^{-1}\delta$ on $1$-forms,
but adapted here to the Carnot homogeneity of the group.

One needs then to translate the graded Sobolev regularity of $K$ into
a standard one to get the $L^{q,p}$ Sobolev controls on $d$. This is
here that the weight gaps of forms arise to control the $(p,q)$
range. Actually, in order to reduce the gap to cohomology weights in
$H^*(\mathfrak{g})$ only, we work with a \emph{contracted de Rham
  complex} $d_c$ instead, available in Carnot geometry. It shares the
same graded analytic regularity as $d$, but uses forms with retracted
components over $H^*(\mathfrak{g})$ only. It is worthwhile noting that
although the retraction of de Rham complex on $d_c$ costs a lot of
derivatives, this is harmless here due to the feature of Sobolev
$L^{q,p}$ cohomology we mentionned above. Actually, in this low energy
large scale problem, loosing regularity is not an issue and the less
derivatives the homotopy $K_c$ controls, the smaller is the $(p,q)$
gap in Sobolev inequality, and the better becomes the $\ell^{q,p}$
vanishing result.

\smallskip

The non-vanishing result (ii) in Theorem~\ref{1} relies on the
construction of homogeneous closed differential forms of any order and
controlled weights. The contracted de Rham complex is useful to this
end too. Such homogeneous forms belong to $L^p$ space with explicit
$p$, but can not be integrated in $L^q$ for $q$ too close to $p$, as
seen using Poincaré duality (construction of compactly supported test
forms and integration by parts).

\section{Local Poincar\'e inequality}

In this section, differential forms of degree $-1$, $\Omega^{-1}$, are
meant to be constants. The complex is completed with the map
$d:\Omega^{-1}\to\Omega^0$ which maps a constant to a constant
function with the same value.

\begin{dfi}
  Let $M$ be a Riemannian manifold. Say that $M$ has $C^h$-bounded
  geometry if injectivity radius is bounded below and curvature
  together with all its derivatives up to order $h$ are uniformly
  bounded.  For $\ell\leq h-1$, let $W^{\ell,p}$ denote the space of
  smooth functions $u$ on $M$ which are in $L^p$ as well as all their
  covariant derivatives up to order $\ell$. When $p=\infty$,
  $W^{\ell,\infty}=C^{\ell}\cap L^{\infty}$.
\end{dfi}

\begin{rem}\label{whpdecr}
  On a $C^h$-bounded geometry $n$-manifold, these Sobolev spaces are
  interlaced. Let $q\geq p$. Then $W^{h,p}\subset W^{h-1-n/p,q}$.
\end{rem}
Indeed, on a ball $B$ of size smaller than the injectivity radius,
usual Sobolev embedding holds, $W^{\ell,p}(B)\subset L^q(B)$ provided
$\frac{1}{p}-\frac{1}{q}\leq \frac{\ell}{n}$, an inequality which is
automatically satisfied if $\ell\geq \frac{n}{p}+1$. Pick a covering
$B_i$ of $M$ by such balls with bounded multiplicity. Let
$u_i=\|f\|_{W^{h,p}(B)}$ and $v_i=\|f\|_{W^{h-\ell,q}(B)}$. Then
$v_i\leq C\,u_i$. Furthermore
\begin{eqnarray*}
  \|f\|_{W^{h-\ell,q}}\leq C\,\|(v_i)\|_{\ell^q}\leq C\,\|(v_i)\|_{\ell^p}\leq C'
  \|(u_i)\|_{\ell^p}\leq C''\,\|f\|_{W^{h,p}}.
\end{eqnarray*}

\begin{rem}
  According to Bemelmans-Min Oo-Ruh \cite{B-MinOo-Ruh}, for any fixed
  $h$, any complete Riemannian metric with bounded curvature can be
  approximated by an other one with all derivatives of curvature up to
  order $h$ uniformly bounded.
\end{rem}
Therefore assuming bounded geometry up to a high order is not a
restriction for our overall purposes. The main point is that curvature
and injectivity radius be bounded. Nevertheless, it helps in technical
steps like the following Proposition.

\begin{pro}\label{local}
  Let $M$ be a Riemannian manifold with sectional curvature bounded by
  $K$, as well as all covariant derivatives of curvature up to order
  $h$. Let $R<\frac{\pi}{2\sqrt{K}}$. Assume that $M$ has a positive
  injectivity radius, larger than $2R$. Let $y\in M$. Let $U_j$ be
  balls in $M$ containing $y$ of radii $\leq R$, and
  $U=\bigcap_j U_j$. The Cartan homotopy is an operator $P$ on
  differential forms on $U$ which satisfies $1=Pd+dP$ and maps
  $C^{\ell}\Omega^{k}(U)$ to $C^{\ell}\Omega^{k-1}(U)$ for all
  $\ell\leq h-1$ and all $k=0,\ldots,n$, with norm depending on $K$,
  $h$ and $R$ only.

  Assume further that $U$ contains $B(y,r)$ for some $r>0$. Then $P$
  is bounded from $W^{h-1,p}\Omega^{k}(U)$ to
  $W^{h-n-1,q}\Omega^{k-1}(U)$, provided $p\geq 1$, $q\geq 1$,
  $h>n+1$. Its norm depends on $K$, $h$, $R$ and $r$ only.
\end{pro}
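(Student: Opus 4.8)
The plan is to prove the two paragraphs of the statement in turn; the first is the classical Cartan--de Rham construction made uniform, and the second is the only genuinely new point.

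\emph{Cartan homotopy and the $C^\ell$ bounds.} I would work in geodesic normal coordinates centred at $y$. Since the injectivity radius of $M$ exceeds $2R$, $\exp_y$ is a diffeomorphism from the ball of radius $2R$ in $T_yM$ onto $B(y,2R)$, and every $U_j$, being contained in $B(y,2R)$, lies in this chart. The bound $K$ on sectional curvature together with the bounds on the covariant derivatives of curvature up to order $h$ control the metric tensor and all its coordinate derivatives up to order $h$ on the whole chart, with constants depending on $K$, $h$, $R$ only; in particular the coordinate and covariant versions of $C^\ell$ and $W^{\ell,p}$ are equivalent up to such constants for $\ell\le h-1$, and the metric is uniformly comparable to the Euclidean one. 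Because $R<\min\{\tfrac12\,\mathrm{inj},\tfrac{\pi}{2\sqrt K}\}$, each geodesic ball $U_j$ of radius $\le R$ is strongly geodesically convex; hence $U=\bigcap_jU_j$ is geodesically convex, and in particular star-shaped about $y$ (in the chart, $U$ is star-shaped about the origin). I then take $P$ to be the Cartan homotopy along the radial contraction $\phi_t(w)=\exp_y\!\big(t\exp_y^{-1}(w)\big)$, $t\in[0,1]$, namely $P\omega=\int_0^1\phi_t^*(\iota_{X_t}\omega)\,dt$ with $X_t$ the time-dependent generator of $\phi_t$, and $P$ on $0$-forms equal to evaluation at $y$ (landing in $\Omega^{-1}$). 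Cartan's formula together with $\phi_0\equiv y$ and $\phi_1=\mathrm{id}$ gives $1=Pd+dP$. The maps $\phi_t$ and $X_t$ are jointly smooth in $(t,w)$ and their $C^{\ell+1}$ norms in $w$ are controlled by the metric bounds above, uniformly in $t$; since $P\omega$ involves only $\omega$ itself (not its derivatives) composed with these smooth maps, $P$ maps $C^\ell\Omega^k(U)$ to $C^\ell\Omega^{k-1}(U)$ with norm depending on $K$, $h$, $R$ only, for every $\ell\le h-1$ and every $k$.

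\emph{The Sobolev bound.} The point is that one never has to let $P$ act on genuinely $L^p$ forms: one embeds into a space of $C^\ell$ forms, applies $P$ there, and embeds back. I would use the chain
\[
W^{h-1,p}\Omega^k(U)\ \hookrightarrow\ C^{h-n-1}\Omega^k(U)\ \xrightarrow{\ P\ }\ C^{h-n-1}\Omega^{k-1}(U)\ \hookrightarrow\ W^{h-n-1,q}\Omega^{k-1}(U),
\]
noting $1\le h-n-1\le h-1$ since $h>n+1$. The middle arrow is the previous part. The last inclusion holds because $U\subseteq B(y,2R)$ has volume bounded in terms of $K$, $R$, $n$, so its norm is $\lesssim\max\{1,\mathrm{vol}\,B(y,2R)\}$, independent of $q$. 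The first inclusion is where the hypothesis $B(y,r)\subseteq U$ is used, and it is the only delicate step: being geodesically convex, contained in $B(y,2R)$ and containing $B(y,r)$, in the normal chart $U$ is a convex set of diameter $\lesssim R$ containing a Euclidean ball of radius $\gtrsim r$, hence satisfies a uniform interior cone condition with parameters depending on $r$, $R$ (and $K$, $n$) only. Therefore the Sobolev embedding $W^{h-1,p}(U)\hookrightarrow C^{h-n-1}(U)$ holds with constant depending on $r$, $R$, $h$, $n$, $K$ only, uniformly in $p\in[1,\infty]$, since $(h-1)-(h-n-1)=n\ge n/p$; the borderline case $p=1$ is the classical endpoint embedding $W^{n,1}\hookrightarrow C^0$, which remains valid under a cone condition. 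Composing the three maps gives the bound on $P$ with norm depending on $K$, $h$, $R$, $r$ only.

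\emph{Main difficulty.} The crux is precisely the uniform Sobolev embedding on $U$: without the assumption $B(y,r)\subseteq U$, the intersection $U$ could be an arbitrarily thin convex sliver on which the embedding constant blows up, so controlling it forces one to quantify the cone condition for a convex body trapped between two concentric balls, and to invoke the endpoint embedding $W^{n,1}\hookrightarrow C^0$ in order to cover $p=1$ without spending an extra derivative. Everything else — the normal-coordinate bounds on the metric, the joint smoothness of $\phi_t$ and $X_t$, the identity $1=Pd+dP$, and the boundedness of $P$ on $C^\ell$ — is routine once curvature and its derivatives up to order $h$ are under control.
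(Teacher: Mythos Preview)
Your proposal is correct and follows essentially the same route as the paper: the Cartan homotopy along the radial geodesic contraction in normal coordinates, $C^\ell$ bounds coming from the curvature control on the exponential map, and then the Sobolev step via the chain $W^{h-1,p}\hookrightarrow C^{h-n-1}\xrightarrow{P}C^{h-n-1}\hookrightarrow W^{h-n-1,q}$ with uniform constants supplied by the two-sided ball condition $B(y,r)\subset U\subset B(y,2R)$. The only cosmetic differences are your $[0,1]$ parametrisation of the contraction versus the paper's $[0,\infty)$ flow of the radial field, and your appeal to a cone condition (plus the endpoint $W^{n,1}\hookrightarrow C^0$) where the paper instead uses a bi-Lipschitz map from the unit ball and spends one extra derivative to avoid the borderline.
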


\begin{proof}
  The assumptions on $R$ guarantee that minimizing geodesics between
  points at distance $<R$ are unique and that all balls of radii
  $\leq R$ are geodesically convex. For $x,y\in U$, let $\gamma_{x,y}$
  denote the unique minimizing geodesic from $x$ to $y$, parametrized
  on $[0,1]$ with constant speed $d(x,y)$. Fix $y\in U$. Consider the
  vectorfield $\xi_y$ defined as follows,
  \begin{eqnarray*}
    \xi_{y}(x)=\gamma'_{x,y}(0).
  \end{eqnarray*}
  It is smooth, since in normal coordinates centered at $y$, $\xi_{y}$
  is the radial vectorfield $\xi_y(u)=-u$. Let $\phi_{y,t}$ denote the
  diffeomorphism semi-group generated by $\xi_y$. For $t\in\R_+$,
  $\phi_{y,t}$ maps a point $x$ to $\gamma_{y,x}(e^{-t}d(y,x))$.

  Let $k\geq 1$. Following H. Cartan, define an operator $P_y$ on $k$
  forms $\omega$ by
  \begin{eqnarray*}
    P_y(\omega)=-\int_{0}^{+\infty}\phi_{y,t}^{*}\iota_{\xi_{y}}\omega\,dt.
  \end{eqnarray*}
  Then, on $k$-forms,
  \begin{eqnarray*}
    dP_y+P_yd &=&- \int_{0}^{+\infty}\phi_{y,t}^{*} (d\iota_{\xi_{y}} +
                  \iota_{\xi_{y}}d)\,dt\\    
              &=&-\int_{0}^{+\infty}\phi_{y,t}^{*}\mathcal{L}_{\xi_{y}}\,dt\\
              &=&-\phi_{y,+\infty}^{*}+\phi_{y,0}^{*}=1,
  \end{eqnarray*}
  if $k\geq 1$, since $\phi_{y,0}$ is the identity and
  $\phi_{y,+\infty}$ is the constant map to $y$. On $0$-forms, define
  $P_y(\omega)=\omega(y)$. Then $dP_y=\omega(y)$ viewed as a
  (constant) function on $U$, whereas $P_y d\omega=\omega-\omega(y)$,
  hence $dP_y+P_y d=1$ also on $0$-forms.

  In normal coordinates with origin at $y$, $P_y$ has a simple
  expression
  \begin{eqnarray*}
    P_y\omega(x)&=&\int_{0}^{+\infty}e^{-kt}\omega_{e^{-t}x}(x,\cdots)\,dt\\
                &=&\int_{0}^{1}s^{k-1}\omega_{sx}(x,\cdots)\,ds.
  \end{eqnarray*}
  It shows that $P_y$, read in normal coordinates, is bounded on
  $C^{\ell}$ for all $\ell$.

  The domain of exponential coordinates, $V=exp_y^{-1}(U)$, is
  convex. If it contains a ball of radius $r$, there is a bi-Lipschitz
  homeomorphism of the unit ball to $V$ with Lipschitz constants
  depending on $R$ and $r$ only, hence Sobolev embeddings
  $W^{1,p}\subset L^q$ for $\frac{1}{p}-\frac{1}{q}\leq\frac{1}{n}$
  with uniform constants (if $q=\infty$, $p>n$ is required and $L^q$
  is replaced with $C^0$). If $p\geq 1$, this implies that
  $W^{n+1,p}\subset C^0$, hence $W^{\ell,p}\subset
  C^{\ell-n-1}$. Obviously, $C^{\ell-n-1}\subset W^{\ell-n-1,q}$.

  Since curvature and its derivatives are bounded up to order $h$, the
  Riemannian exponential map and its inverse are $C^{h-1}$-bounded,
  hence $P_y$ is bounded on $C^{\ell}$ for $\ell\leq h-1$. If $h>n+1$,
  the embeddings
  \begin{eqnarray*}
    W^{\ell,p}\subset C^{\ell-n-1}\subset W^{\ell-n-1,q}
  \end{eqnarray*}
  hold on $U$ with bounds depending on $K$, $R$ and $r$ only, hence
  $P$ maps $W^{\ell,p}$ to $W^{\ell-n,q}$, with uniform bounds.
\end{proof}

\section{$\ell^{q,p}$ cohomology and Sobolev $L^{q,p}$ cohomology}
\label{sec:leray}

\begin{dfi}
  Let $M$ be a Riemannian manifold. Let $h,h'\in\N$. The \emph{Sobolev
    $L^{q,p}$ cohomology} is
  \begin{eqnarray*}
    L_{h',h}^{q,p}H^{\cdot}(M)=\{\text{closed forms in }W^{h,p}\} /
    d(\{\text{forms in }W^{h',q}\}) \cap W^{h,p}.
  \end{eqnarray*} 
\end{dfi}

\begin{rem}
  \label{rem:decreasing}
  On a bounded geometry $n$-manifold, this is nonincreasing in $q$ in
  the following sense. Let $q'\geq q$. Then $L_{h',h}^{q,p}H^{k}(M)$
  surjects onto $L_{h'-qn-1,h}^{q',p}H^{k}(M)$, see Remark
  \ref{whpdecr}.
\end{rem}

\begin{thm}
  \label{prop:leray}
  Let $1\leq p\leq q\leq \infty$. Let $M$ be a Riemannian manifold of
  $C^\ell$-bounded geometry, with $\ell > n^{n+1}+1$. Let $T$ be a
  simplicial complex quasiisometric to $M$. For every integers $h,h'$
  such that $n^{n+1}<h,h'\leq \ell -1$, there exists an isomorphism
  between the $\ell^{q,p}$ cohomology of $T$ and the Sobolev $L^{q,p}$
  cohomology $L_{h',h}^{q,p}H^{\cdot}(M)$.
\end{thm}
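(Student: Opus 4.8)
The plan is to build the isomorphism via a standard Leray-type argument, comparing both cohomologies to the cohomology of a double complex, while being careful about uniformity. I will produce, from the simplicial complex $T$ quasiisometric to $M$, a locally finite open covering $\mathcal{U}=\{U_j\}$ of $M$ whose nerve is quasiisometric to $T$ (equivalently, take a geometric realization of $T$ inside $M$ and fatten the stars of vertices to open sets). The key geometric input is that, because $M$ has bounded geometry with injectivity radius bounded below, one can arrange the $U_j$ and all their nonempty finite intersections $U_\sigma=\bigcap_{j\in\sigma}U_j$ to be contained in balls of a fixed small radius $R<\frac{\pi}{2\sqrt K}$ and to contain a ball of a fixed radius $r>0$. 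This is exactly the hypothesis needed to apply Proposition~\ref{local}: on each $U_\sigma$ there is a Cartan homotopy $P_\sigma$ with $1=P_\sigma d+dP_\sigma$ which is bounded $W^{h-1,p}\Omega^k(U_\sigma)\to W^{h-n-1,q}\Omega^{k-1}(U_\sigma)$ with constants depending only on $K,h,R,r$, hence uniform in $\sigma$. The combinatorial dimension bound on $h,h'$ (the $n^{n+1}$) is there to absorb the loss of derivatives coming from the number of times one iterates through the \v{C}ech degrees — each step of the zig-zag costs roughly $n$ derivatives and the \v{C}ech complex has length at most the multiplicity of the covering, which is controlled by $n$.

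First I would set up the \v{C}ech-de~Rham bicomplex $\mathcal{A}^{p,q}=\prod_{|\sigma|=p+1}\Omega^{q}(U_\sigma)$ with the de~Rham differential $d$ in one direction and the \v{C}ech differential $\check\delta$ in the other, but weighted by $\ell^p$/$\ell^q$ summability over the index set of simplices and by $W^{h,p}$/$W^{h',q}$ Sobolev norms on each piece. Using a smooth partition of unity $\{\chi_j\}$ subordinate to $\{U_j\}$ with uniformly bounded derivatives up to order $\ell$ (available by bounded geometry), the usual contracting homotopy for $\check\delta$, namely $K\omega_{\sigma}=\sum_j \chi_j\,\omega_{j\sigma}$, is bounded on the appropriate Sobolev-$\ell^p$ spaces — this is where $\chi_j\in W^{\ell,\infty}$ is used, and it sends the space of $\ell^p$ data with $W^{h,p}$ components to the same, with a fixed loss. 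In the other direction, the collection $(P_\sigma)$ assembles to a contracting homotopy for the de~Rham differential along each column, uniformly bounded by Proposition~\ref{local}. Standard homological algebra for double complexes with both a row-homotopy and a column-homotopy then yields that the total cohomology computes both the cohomology of the bottom row (closed forms on $M$ modulo exact ones — i.e.\ $L^{q,p}_{h',h}H^\cdot(M)$, after checking the $\ell^p$ summability over the covering is equivalent to the global $W^{h,p}$ norm, again by bounded multiplicity) and the cohomology of the leftmost column (the simplicial $\ell^{q,p}$ cohomology of the nerve, which is quasiisometric to $T$, hence has the same $\ell^{q,p}$ cohomology).

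The remaining work is bookkeeping of norms: one must verify that integration over $M$ against the partition of unity takes a global form in $W^{h,p}\Omega^k(M)$ to an $\ell^p$-summable family of local $W^{h,p}$ forms and back, with equivalent norms, and that restricting a form to a $U_\sigma$ and multiplying by $\chi$-factors stays within the Sobolev scale; all of this is quantitative bounded geometry, using the fixed upper bound on the multiplicity of the covering and the bounds on derivatives of the $\chi_j$. One also tracks how many derivatives are consumed: each application of a $P_\sigma$ costs $n$ (Proposition~\ref{local}), each \v{C}ech step costs none beyond the fixed $\chi$-loss, and the zig-zag has length bounded by the \v{C}ech length; choosing $h,h'>n^{n+1}$ leaves enough room. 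Finally, the independence of the result on the particular choice of $h,h'$ in the allowed range follows from Remark~\ref{whpdecr} / Remark~\ref{rem:decreasing}: different choices give cohomologies related by surjections which, by the same argument run for two values at once, are isomorphisms onto the $\ell^{q,p}$ cohomology of $T$.

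I expect the main obstacle to be the uniformity of the covering and of the homotopies, i.e.\ producing $\mathcal{U}$ so that every finite intersection $U_\sigma$ is simultaneously small enough (radius $<R$) and fat enough (contains a ball of radius $r$) with $R,r$ independent of $\sigma$, since the quasiisometry with $T$ only gives control up to fixed multiplicative constants and one must convert this into the metric convexity hypotheses of Proposition~\ref{local}. Once the covering is in place with these uniform bounds, the homological algebra is routine and the norm estimates, while tedious, are forced.
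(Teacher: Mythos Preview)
Your proposal is correct and follows essentially the same Leray/\v{C}ech--de~Rham double complex argument as the paper: a bicomplex $C^{j,k}$ with the \v{C}ech coboundary $\delta$ inverted by the partition-of-unity operator $\epsilon$, the de~Rham $d$ inverted on intersections by the Cartan homotopies $P$ of Proposition~\ref{local}, and explicit zig-zag maps $\Phi_j=(\epsilon d)^{j+1}$, $\Psi_k=(P\delta)^{k+1}$ shown to be mutual homotopy inverses, with the Sobolev bookkeeping exactly as you describe.

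Two minor points of comparison. First, what you flag as the main obstacle---producing a covering whose finite intersections are uniformly contained in a ball of radius $R$ and contain a ball of radius $r$---is handled in the paper by a direct construction rather than by passing through an abstract quasiisometry with $T$: take a maximal $1/2$-separated net $\{x_i\}$, let the nerve of the unit-ball covering be $T$, and set $U_i=B(x_i,3)$; then any nonempty intersection $U_{i_0\ldots i_j}$ sits in a ball of radius $3$ and contains a concentric ball of radius $1$. Second, the paper writes out the chain homotopies $R_i$ for $\Psi\circ\Phi$ explicitly and proves the identity $(P\delta)^i(\epsilon d)^i=1-R_i\delta-\delta R_{i-1}$ by induction, rather than invoking ``standard homological algebra for double complexes''; this is the same content, just made concrete so that the derivative count can be read off directly.
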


The proof is a careful inspection of Leray's acyclic covering theorem.

First construct a simplicial complex $T$ quasiisometric to $M$. Pick a
left-invariant metric on $M$. Up to rescaling, one can assume that
sectional curvature is $\leq 1/n^2$ and injectivity radius is
$>2n$. Pick a maximal $1/2$-separated subset $\{x_i\}$ of $M$. Let
$B_i$ be the covering by closed unit balls centered on this set. Let
$T$ denote the nerve of this covering. Let $U_i=B(x_i,3)$. Note that
if $x_{i_0},\ldots,x_{i_j}$ span a $j$-simplex of $T$, then the
intersection
\begin{eqnarray*}
  U_{{i_0}\ldots{i_j}} := \bigcap_{m=0}^{j}U_{i_m}
\end{eqnarray*}
is contained in a ball of radius 3 and contains a concentric ball of
radius 1.

Pick once and for all a smooth cut-off function with support in
$[-1,1]$, compose it with distance to points $x_i$ and convert the
obtained collection of functions into a partition of unity $\chi_i$ by
dividing by the sum.

Define a bicomplex $C^{j,k}=$ skew-symmetric maps associating to
$j+1$-tuples $(i_0,\ldots, i_j)$ differential $k$-forms on $j+1$-fold
intersections $U_{i_0}\cap\cdots\cap U_{i_j}$. It is convenient to
extend the notation to
\begin{eqnarray*}
  C^{-1,k} =\Omega^k(M),\quad C^{j,-1}=C^j(T),\quad
  C^{j,\cdot}=C^{\cdot,j}=0 \textrm{ if }  j<-1.
\end{eqnarray*}

The two commuting complexes are $d:C^{j,k}\to C^{j,k+1}$ and the
simplicial coboundary $\delta: C^{j,k}\to C^{j+1,k}$ defined by
\begin{eqnarray*}
  \delta(\phi)_{i_0 \ldots i_{j+1}}= \phi_{i_0 \ldots i_{j}}-\phi_{i_0
  \ldots i_{j-1}j_{j+1}} +\cdots+(-1)^{j+1}\phi_{i_1 \ldots i_{j+1}},
\end{eqnarray*}
restricted to $U_{i_0}\cap\cdots\cap U_{i_{j+1}}$. By convention,
$d:C^{j,-1}\to C^{j,0}$ maps scalar $j$-cochains to skew-symmetric
maps to functions on intersections which are constant. Also,
$\delta:C^{-1,k}\to C^{0,k}$ maps a globally defined differential form
to the collection of its restrictions to open sets $U_i$. Other
differentials vanish.

The coboundary $\delta$ is inverted by the operator
\begin{eqnarray*}
  \epsilon:C^{j,k}\to C^{j-1,k}
\end{eqnarray*}
defined by
\begin{eqnarray*}
  \epsilon(\phi)_{i_0 \ldots i_{j-1}}=\sum_{m}\chi_m \phi_{m i_0 \ldots i_{j-1}}.
\end{eqnarray*}
By an inverse, we mean that $\delta\epsilon+\epsilon\delta=1$. This
identity persists in all nonnegative bidegrees provided
$\epsilon : C^{0,k} \to C^{-1,k}$ is defined by
$\epsilon (\phi) = \sum_m \chi_m \phi_m$ and $\epsilon=0$ on
$C^{j,k}$, $j<0$.

Consider the maps
$$
\Phi_j = (\epsilon d)^{j+1} : C^j(T)=C^{j,-1} \to
C^{-1,j}=\Omega^j(M).
$$ 
By definition, given a cochain $\kappa$, $\Phi_j(\kappa)$ is a local
linear expression of the constants defining $\kappa$, multiplied by
polynomials of the $\chi_i$ and their differential. Therefore
$\Phi_j(\kappa)$ is $C^\infty$ and belongs to $W^{h,q}(\Omega^j(M))$
for all $q \geq p\geq 1$ if $\kappa \in \ell^p(C^j(T))$.

Since
\begin{eqnarray*}
  (\epsilon d)\delta=\epsilon\delta d=(1-\delta\epsilon)d=d-\delta\epsilon d,
\end{eqnarray*}
for $j\geq 0$, on $C^{j,-1}$,
\begin{eqnarray*}
  (\epsilon d)^{j+2}\delta
  &=&(\epsilon d)^{j+1} (d-\delta\epsilon d)= -(\epsilon d)^{j+1}
      \delta(\epsilon d) \\
  &=&\cdots\\
  &=&(-1)^{j+1}(\epsilon d)\delta(\epsilon
      d)^{j+1}=(-1)^{j+1}(d-\delta\epsilon d) (\epsilon d)^{j+1}\\
  &=&(-1)^{j+1}d(\epsilon d)^{j+1}-(-1)^{j+1}\delta(\epsilon d)^{j+2}\\
  &=&(-1)^{j+1}d(\epsilon d)^{j+1}.
\end{eqnarray*}
Indeed, $(\epsilon d)^{j+2}(C^{j,-1})\subset C^{-2,j+1}=\{0\}$. In
other words,
\begin{eqnarray*}
  \Phi_{j+1} \circ \delta=(-1)^{j+1}d\circ\Phi_j.
\end{eqnarray*}

\medskip

We now proceed in the opposite direction to produce a cohomological
inverse of $\Phi$. Let us mark each intersection
$U_{i_0 \ldots i_j}:=U_{i_0}\cap\cdots\cap U_{i_j}$ with the point
$y=x_{i_0}$. Proposition \ref{local} provides us with an operator
$P_{i_0\ldots i_j}$ on usual $k$-forms on $U_{i_0 \ldots i_j}$.
Putting them together yields an operator $P:C^{j,k}\to C^{j,k-1}$ such
that $1=dP+Pd$.  Furthermore, $P$ is bounded from $W^{\cdot,p}$ to
$W^{\cdot-n-1,q}$.

Exchanging the formal roles of $(\delta, \epsilon)$ and $(d,P)$, we
define
\begin{displaymath}
  \Psi_k = (P\delta)^{k+1} :\Omega^k(M)=C^{-1,k}\to C^{k,-1}=C^k(T), 
\end{displaymath}
As above, one checks easily that
$\Psi_{k+1} \circ d = (-1)^{k+1}\delta \circ \Psi_k$.

Observe that the maps $\Psi_j \circ \Phi_j$, $j=0,1,\ldots$, put
together form a morphism of the complex $C^{\cdot,-1}=C^\cdot(T)$
(i.e. they commute with $\delta$). We next show that it is homotopic
to the identity. Let us prove by induction on $i$ that, on
$C^{\cdot,-1}$,
\begin{equation}
  \label{eq:1}
  (P \delta)^i (\epsilon d)^i = 1 - R_i \delta - \delta R_{i-1}\,,
\end{equation}
with $R_0= 0$ and
$R_i = \sum_{k=0}^{i-1} (-1)^k (P \delta)^k P (\epsilon d)^{k+1}$.
This implies the result for $\Psi_j \circ \Phi_j$.
\begin{proof}
  For $i=1$, one has on $C^{\cdot,-1}$
  \begin{align*}
    (P \delta) (\epsilon d) & = P(1 - \epsilon \delta)d = Pd -
                              P\epsilon  d \delta \\
                            & = 1 - (P\epsilon d) \delta ,
  \end{align*}
  since $Pd = 1$ on $C^{\cdot,-1}$. Assuming \eqref{eq:1} for $i$, one
  writes
  \begin{align}
    (P\delta)^{i+1} (\epsilon d)^{i+1} 
    & = (P \delta)^i P\delta
      \epsilon d (\epsilon d)^i =  (P \delta)^i P(1 - \epsilon
      \delta) d (\epsilon d)^i  \nonumber \\
    & = (P
      \delta)^i (1 - dP - P \epsilon \delta d ) (\epsilon d)^i
      \nonumber \\
    & = (P \delta)^i  (\epsilon d)^i -  (P \delta)^i dP  (\epsilon
      d)^i - (P \delta)^i P (\epsilon d) \delta (\epsilon
      d)^i .
      \label{eq:2}
  \end{align}
  
  About the second term in \eqref{eq:2}, one finds that
  $\delta(P\delta) d = - \delta d (P\delta)$, hence by induction, one
  can push the isolated $d$ term to the left,
  \begin{displaymath}
    (P \delta)^i d = (-1)^{i-1} P\delta d (P\delta)^{i-1} = (-1)^{i-1}
    Pd \delta (P\delta)^{i-1} = (-1)^{i-1}\delta (P\delta)^{i-1} ,
  \end{displaymath}
  when the image lies within $C^{\cdot,-1}$, as it does in
  \eqref{eq:2}.
  
  For the third term in \eqref{eq:2}, one sees that
  $(\epsilon d) \delta (\epsilon d) = (\epsilon d)(1 - \epsilon
  \delta) d = - (\epsilon d)^2 \delta$,
  so that one can push the isolated $\delta$ term to the right,
  \begin{displaymath}
    (P \delta)^i P (\epsilon d) \delta (\epsilon
    d)^i =  (-1)^i (P \delta)^i P (\epsilon d)^{i+1} \delta \,.
  \end{displaymath}
  Gathering in \eqref{eq:2} gives
  \begin{displaymath}
    (P\delta)^{i+1} (\epsilon d)^{i+1}  = 1 - (R_i + (-1)^i
    (P\delta)^i P (\epsilon d)^{i+1} ) \delta - \delta (R_{i-1} +
    (-1)^{i-1}  (P\delta)^{i-1} P (\epsilon d)^i),
  \end{displaymath}
  that proves \eqref{eq:1}.
\end{proof}

Similarly, $\Phi\circ\Psi$ is homotopic to the identity on the complex
$(C^{-1,\cdot},d)$, $\Phi\circ\Psi=1-dR'-R'd$.

Finally, let us examine how Sobolev norms behave under the class of
endomorphisms we are using. Maps from cochains to differential forms,
i.e. $\epsilon$, $\Phi$ are bounded from $\ell^p$ to $W^{h-1,p}$. Maps
from differential forms, i.e. $P$, $\Psi$, loose derivatives (but this
is harmless since the final outputs are scalar cochains) so are
bounded from $W^{h,p}$ to $\ell^p$, $h\leq \ell-1$. One merely needs
$h$ large enough to be able to apply $P$ $n$ times, whence the
assumption $h\geq n^{n+1}$. Maps from cochains to cochains, e.g. $R$,
are bounded on $\ell^p$, maps from differential forms to differential
forms, e.g. $R'$, are bounded from $W^{h',p}$ to $W^{h,p}$ for every
$h'\geq n^{n+1}$ such that $h'\leq \ell-1$.

If $q\geq p$, the $\ell^q$-norm is controlled by the $\ell^p$-norm,
hence $R$ is bounded from $\ell^p$ to $\ell^q$. It is also true that
$R'$ is bounded from $W^{h'-1,p}$ to $W^{h-1,q}$. Indeed, it is made
of bricks which map differential forms to cochains or cochains to
differential forms, so no loss on derivatives affects the final
differentiability. For the same reason, one can gain local
integrability from $L^p_{loc}$ to $L^q_{loc}$ without restriction on
$p$ and $q$ but $p$, $q\geq 1$.

It follows that $\Phi$ and $\Psi$ induce isomorphisms between the
$\ell^{q,p}$ cohomology of $T$ and the Sobolev $L^{q,p}$ cohomology.

\section{De Rham complex and graduation on Carnot groups}
\label{sec:dc_complex}

From now on, we will work on Carnot Lie groups. These are nilpotent
Lie groups $G$ such that their Lie algebra $\mathfrak{g}$ splits into
a direct sum
$$
\mathfrak{g} = \mathfrak{g}_1 \oplus \mathfrak{g}_2 \oplus \cdots
\oplus\mathfrak{g}_r \quad \mathrm{satisfying} \quad [\mathfrak{g}_1,
\mathfrak{g}_i] = \mathfrak{g}_{i+1} \ \mathrm{for} \ 1\leq i \leq
r-1.
$$
The weight $w = i$ on $\mathfrak{g}_i$ induces a family of dilations
$\delta_t = t^w$ on $\mathfrak{g} \simeq G$.

In turn, the tangent bundle $TG$ splits into left invariant
sub-bundles $H_1 \oplus \cdots \oplus H_r$ with $H_i = \mathfrak{g}_i$
at the origin. Finally, differential forms decompose through their
weight $\Omega^k G = \oplus_w \Omega^{k,w} G$ with
$\Omega^{k_1} H^*_{w_1}\wedge \cdots \wedge \Omega^{k_i}H^*_{w_i}$ of
weight $w = k_1 w_1 + \cdots + k_i w_i$. De Rham differential $d$
itself splits into
$$
d = d_0 + d_1 + \cdots + d_r,
$$ with $d_i$ increasing weight by $i$. Indeed, this is clear on functions
where $d_0 = 0$ and $d_i = d $ along $H_i$. This extends to forms,
using $d(f\alpha) = df \wedge \alpha + f d \alpha$ and observing that
for left invariant forms $\alpha$ and left invariant vectors $X_i$,
Cartan's formula reads
\begin{align*}
  d \alpha (X_1,\cdots , X_{k+1}) & = \sum_{1\leq i < j\leq k+1} (-1)^{i+j}
                                    \alpha 
                                    ([X_i,X_j ], \cdots, \widehat{X}_{i,j},
                                    \cdots  X_{k+1}) \\
                                  & = d_0 \alpha (X_1,\cdots , X_{k+1}).
\end{align*}
Hence $d= d_0$ is a weight preserving algebraic (zero order) operator
on invariant forms, and over a point,
$\ker d_0 / \im d_0 = H^*(\mathfrak{g})$ is the Lie algebra cohomology
of $G$. Note also that from these formulas, $d_i$ is a homogeneous
differential operator of degree $i$ and increases the weight by
$i$. It is homogeneous of degree $0$ trough $h^*_\lambda$, as $d$,
since $d h^*_\lambda = h^*_\lambda d$.

\medskip

This algebraic $d_0$ allows to split and contract de Rham complex on a
smaller subcomplex, as we now briefly describe. This was shown in
\cite{Rumin_CRAS_1999} and \cite{Rumin_TSG} in the more general
setting of Carnot--Caratheodory manifolds. More details may be found
there.

Pick an invariant metric so that the $\mathfrak{g}_i$ are orthogonal
to each others, and let $\delta_0 = d_0^*$ and $d_0^{-1}$ be the
partial inverse of $d_0$ such that $\ker d_0^{-1} = \ker \delta_0$,
$d_0^{-1} d_0 = \Pi_{\ker \delta_0}$ and
$d_0 d_0^{-1} = \Pi_{\im d_0}$.

Let
$E_0 = \ker d_0 \cap \ker \delta_0 \simeq \Omega^* H^*(\mathfrak{g})$.
Iterating the homotopy $r = 1 - d_0^{-1} d - d d_0 ^{-1}$ one can show
the following results, stated here in the particular case of Carnot
groups.

\begin{thm} \cite[Theorem~1]{Rumin_CRAS_1999}
  \label{thm:dc-complex}
  \begin{enumerate}
  \item The de Rham complex on $G$ splits as the direct sum of two
    sub-complexes $E \oplus F$, where
    $E = \ker d_0^{-1} \cap \ker d d_0^{-1}$ and
    $F= \im d_0^{-1} + \im (d d_0^{-1}) $.
  \item The retractions $r^k$ stabilize to $\Pi_E$ the projection on
    $E$ along $F$. $\Pi_E$ is a homotopy equivalence of the form
    $\Pi_E = 1 - Rd - dR$ where $R$ is a differential operator.
  \item One has $\Pi_E \Pi_{E_0} \Pi_E = \Pi_E$ and
    $\Pi_{E_0} \Pi_E \Pi_{E_0} = \Pi_{E_0}$ so that the complex
    $(E,d)$ is conjugated through $\Pi_{E_0}$ to the complex
    $(E_0, d_c)$ with $d_c = \Pi_{E_0} d \Pi_E \Pi_{E_0}$.
  \end{enumerate}

\end{thm}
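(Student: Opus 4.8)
The plan is to prove Theorem~\ref{thm:dc-complex} by iterating the algebraic homotopy retraction $r = 1 - d_0^{-1}d - dd_0^{-1}$ and tracking how the weight filtration forces the iterates to stabilize. First I would record the basic algebraic identities attached to the partial inverse $d_0^{-1}$: since $d_0^{-1}d_0 = \Pi_{\ker\delta_0}$, $d_0d_0^{-1} = \Pi_{\im d_0}$, and $E_0 = \ker d_0 \cap \ker\delta_0$, one has the Hodge-type orthogonal decomposition (pointwise, degree by degree)
\begin{eqnarray*}
  \Omega^k = E_0^k \oplus \im d_0 \oplus \im \delta_0 ,
\end{eqnarray*}
together with $d_0^{-1}d_0^{-1} = 0$ and the fact that $d_0^{-1}$ raises the $d_0$-cohomological degree while lowering weight by at least $1$ (because $d_0$ itself preserves weight and $d = d_0 + d_1 + \cdots$ with $d_i$ raising weight by $i$, so $d - d_0$ strictly raises weight). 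The key point is that $r - \Pi_{E_0}$ is built from $d_0^{-1}(d - d_0)$ and $(d-d_0)d_0^{-1}$, each of which strictly raises weight; hence on the finite-dimensional space $\Lambda^\cdot\mathfrak{g}^*$, whose weights occupy a bounded range, a high enough power $r^N$ of $r$ annihilates all the ``error'' terms and the sequence $r^k$ stabilizes. I would make this precise by showing $r$ maps $E_0$-part to itself modulo higher weight and that $r|_{\im d_0 \oplus \im \delta_0}$ is weight-strictly-increasing nilpotent, so $r^k \to \Pi_E$ with $\Pi_E$ the projector onto $E = \ker d_0^{-1} \cap \ker(dd_0^{-1})$ along $F = \im d_0^{-1} + \im(dd_0^{-1})$; that $\Pi_E$ is a chain map is automatic from $rd = dr$, and writing $\Pi_E = 1 - Rd - dR$ with $R = \sum_{k\geq 0} (\text{terms built from } d_0^{-1}) $ a differential operator follows by telescoping the identity $r = 1 - d_0^{-1}d - dd_0^{-1}$ through its powers.

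For part~(1), that $E$ and $F$ are subcomplexes and the de Rham complex splits as $E \oplus F$: the chain-map property of $\Pi_E$ gives $d E \subset E$ and $d F \subset F$, and $\Pi_E^2 = \Pi_E$ (from stabilization, $r\Pi_E = \Pi_E$) gives the direct-sum decomposition $\Omega^\cdot = E \oplus F$ with $F = \ker\Pi_E$. The descriptions $E = \ker d_0^{-1} \cap \ker(dd_0^{-1})$ and $F = \im d_0^{-1} + \im(dd_0^{-1})$ I would verify by checking they are $r$-stable, that $r = 1$ on $E$ and $r$ is nilpotent on $F$, and a dimension count degree-by-degree. For part~(3), the conjugacy between $(E,d)$ and $(E_0,d_c)$: I would show $\Pi_{E_0}|_E : E \to E_0$ and $\Pi_E|_{E_0} : E_0 \to E$ are mutually inverse isomorphisms of graded vector spaces. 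The relation $\Pi_E \Pi_{E_0} \Pi_E = \Pi_E$ says $\Pi_{E_0}$ acts as identity on $E$ after returning, and $\Pi_{E_0}\Pi_E\Pi_{E_0} = \Pi_{E_0}$ symmetrically; these follow because $E$ and $E_0$ agree modulo $F$-type (higher-weight) terms — concretely $\Pi_E = \Pi_{E_0} + (\text{weight-raising})$, so on $E_0$ the composite $\Pi_{E_0}\Pi_E$ is unipotent, hence invertible, and its inverse truncates to the identity at top weight. Then $d_c := \Pi_{E_0} d \Pi_E \Pi_{E_0}$ is forced to be the transported differential, and $d_c^2 = 0$ follows from $d^2 = 0$ on $E$ together with the conjugacy.

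The main obstacle I anticipate is not any single identity but the bookkeeping needed to make the stabilization argument airtight: one must carefully separate the weight grading (finite range, drives nilpotency of the error) from the homological degree (on which $d_0^{-1}$ and $d_0$ shift in opposite directions), and check that at each step the operator $r$ genuinely preserves the relevant subspaces modulo strictly higher weight, so that no cancellation is lost and $R$ really is a polynomial (hence differential) operator rather than merely a formal series. A secondary subtlety is that $d_0^{-1}$ depends on the chosen invariant metric making the $\mathfrak{g}_i$ orthogonal, so one should note that while $E$, $F$, $\Pi_E$, $d_c$ depend on this choice, the resulting complex $(E_0,d_c)$ is the one we want and its cohomology is $H^\cdot(\mathfrak{g})$ by construction of $E_0 \simeq \Omega^\cdot H^\cdot(\mathfrak{g})$. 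Since this theorem is quoted from \cite{Rumin_CRAS_1999}, I would keep the argument to a sketch emphasizing the weight-nilpotency mechanism and refer there for the full verification.
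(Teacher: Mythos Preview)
The paper does not supply its own proof of this theorem: it is stated with attribution to \cite{Rumin_CRAS_1999} and \cite{Rumin_TSG}, preceded only by the sentence ``Iterating the homotopy $r = 1 - d_0^{-1} d - d d_0^{-1}$ one can show the following results.'' Your sketch follows exactly this iteration-and-stabilization route, and your closing remark that the argument should remain a sketch with a pointer to the reference is precisely what the paper itself does.

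One small slip to correct: $d_0^{-1}$ does not lower weight --- it preserves weight, since $d_0$ does (both are algebraic, built from the weight-zero piece of $d$). The strict weight increase you need comes entirely from $d-d_0 = d_1 + \cdots + d_r$, so that $r - \Pi_{E_0} = -d_0^{-1}(d-d_0) - (d-d_0)d_0^{-1}$ is strictly weight-raising; you invoke this correctly in the next clause, so the misstatement is cosmetic and does not affect the mechanism.
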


This shows in particular that the de Rham complex, $(E,d)$ and
$(E_0, d_c)$ are homotopically equivalent complexes on smooth
forms. For convenience in the sequel, we will refer to $(E_0,d_c)$ as
the \emph{contracted de Rham complex} (also known as Rumin complex)
and sections of $E_0$ as \emph{contracted forms}, since they have a
restricted set of components with respect to usual ones.

We shall now describe its analytical properties we will use.

\section{Inverting $d_c$ and $d$ on $G$}
\label{sec:inverting-d-d_c}

De Rham and contracted de Rham complexes are not homogeneous as
differential operators, but are indeed invariant under the dilations
$\delta_t$ taking into account the weight of forms. This leads to a
notion of sub-ellipticity in a graded sense, called C-C ellipticity in
\cite{Rumin_CRAS_1999, Rumin_TSG}, that we now describe.

Let $\nabla = d_1$ the differential along $H = H_1$. Extend it on all
forms using $\nabla (f \alpha) = (\nabla f ) \alpha$ for left
invariant forms $\alpha$ on $G$. Kohn's Laplacian
$\Delta_H = \nabla^* \nabla$ is hypoelliptic since $H$ is bracket
generating on Carnot groups, and positive self-adjoint on $L^2$. Let
then
$$
|\nabla| = \Delta_H^{1/2}
$$
denotes its square root. Following \cite[Section~3]{Folland_1975} or
\cite{CGGP}, it is a homogeneous first order pseudodifferential
operator on $G$ in the sense that its distributional kernel, acting by
group convolution, is homogeneous and smooth away from the origin.  It
possesses an inverse $|\nabla|^{-1}$, which is also a homogeneous
pseudodifferential operator of order $-1$ in this calculus. Actually
according to \cite[Theorem 3.15]{Folland_1975}, it belongs to a whole
analytic family of pseudodifferential operators $|\nabla|^\alpha$ of
order $\alpha \in \C$. Note that kernels of these homogeneous
pseudodifferential operators may contain logarithmic terms, when the
order is an integer $\leq -Q$. We refer to \cite{CGGP} and
\cite[Section~1]{Folland_1975} for more details and properties of this
calculus.

\smallskip A particularly useful test function space for these
operators is given by the space of Schwartz functions all of whose
polynomial moments vanish,
\begin{equation}
  \label{eq:3}
  \S_0=\{f\in\S\,;\,\langle f,P\rangle=0 \text{ for every
    polynomial }P\}, 
\end{equation}
where $\mathcal{S}$ denotes the Schwartz space of $G$ and
$\langle f,P\rangle=\int_Gf(x)P(x)\,dx$. Unlike more usual test
functions spaces as $C^\infty_c$ or $\mathcal{S}$, this space $\S_0$
is stable under the action of pseudodifferential operators of any
order in the calculus, see \cite[Proposition 2.2]{CGGP}, so that they
can be composed on it. In particular by \cite[Theorem
3.15]{Folland_1975}, for every $\alpha$, $\beta \in \C$,
\begin{equation}
  \label{eq:4}
  |\nabla|^\alpha |\nabla|^{\beta} = |\nabla|^{\alpha+\beta} \
  \mathrm{on} \ \S_0 \,.
\end{equation}
We shall prove in Proposition~\ref{prop:density-s_0} that $\S_0$ is
dense in all Sobolev spaces $W^{h,p}$ with $h \in \N$ and
$1< p< +\infty$, but we shall work mainly in $\S_0$ in this section.

\medskip

Now let $N=w$ on forms of weight $w$. Consider the operator
$|\nabla|^N$, preserving the degree and weight of forms, and acting
componentwise on $\S_0$ in a left-invariant frame.

From the previous discussion,
$d^{\nabla} = |\nabla|^{-N} d |\nabla|^N$ and
$d_c^{\nabla} = |\nabla|^{-N} d_c |\nabla|^N$ are both homogeneous
pseudodifferential operators of (differential) order $0$. Indeed, as
observed in Section~\ref{sec:dc_complex}, $d$ splits into
$d=\sum_i d_i$ where $d_i$ is a differential operator of horizontal
order $i$ which increases weight by $i$.  On forms of weight $w$,
$|\nabla|^N$ has differential order $w$, $d_i |\nabla|^N$ has order
$w+i$ and maps to forms of weight $w+i$, on which $ |\nabla|^{-N}$ has
order $-(w+i)$, hence $|\nabla|^{-N} d |\nabla|^N$ has order $0$. The
same argument applies to $d_c$.

Viewed in this Sobolev scale, these complexes become invertible in the
pseudodifferential calculus. Let
$$
\Delta^{\nabla } = d^{\nabla} (d^{\nabla})^* + (d^{\nabla})^* d^\nabla
\quad \mathrm{and} \quad \Delta_c^{\nabla} = d_c^{\nabla}
(d_c^{\nabla})^* + (d_c^{\nabla})^* d_c^\nabla,
$$
not to be confused with the non homogeneous
$d_c^{\nabla} (d_c^*)^{\nabla} + (d_c^*)^{\nabla} d_c^{\nabla} =
|\nabla|^{-N} (d_c d_c^* + d_c^* d_c ) |\nabla|^N$.

\begin{thm}\label{thm:left_inverse}
  \cite[Theorem 3]{Rumin_CRAS_1999}, \cite[Theorem 5.2]{Rumin_TSG} The
  Laplacians $\Delta^{\nabla} $ and $\Delta_c^{\nabla } $ have left
  inverses $Q^{\nabla}$ and $Q_c^{\nabla}$, which are zero order
  homogeneous pseudodifferential operators.
\end{thm}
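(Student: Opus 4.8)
The plan is to establish that the homogeneous Laplacians $\Delta^\nabla$ and $\Delta_c^\nabla$ are \emph{maximally hypoelliptic} (in the sense of Helffer--Nourrigat / Folland) so that the general Rockland-type criterion in the Folland and Christ--Geller--G\l owacki--Polin calculus produces a two-sided parametrix; since the operators are honest order-zero pseudodifferential operators in the calculus and the groups are Carnot (hence graded and satisfying the Rockland condition as soon as the operator is injective on every nontrivial irreducible representation), a left inverse will in fact be a genuine inverse on $\S_0$. Concretely, I would first recall from Section~\ref{sec:dc_complex} and Section~\ref{sec:inverting-d-d_c} that $d^\nabla=|\nabla|^{-N}d|\nabla|^N$ and $d_c^\nabla=|\nabla|^{-N}d_c|\nabla|^N$ are order-zero homogeneous (self-similar under $\delta_t$ on $\S_0$) pseudodifferential operators, that their formal adjoints $(d^\nabla)^*$, $(d_c^\nabla)^*$ are of the same type, and that consequently $\Delta^\nabla$ and $\Delta_c^\nabla$ are order-zero, self-adjoint, nonnegative homogeneous operators in the calculus.

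The core step is the hypoellipticity/invertibility of these zero-order Laplacians. For $d$ this is exactly \cite[Theorem~3]{Rumin_CRAS_1999} and, in the C--C manifold generality, \cite[Theorem~5.2]{Rumin_TSG}; the mechanism there is that the graded ("C--C") ellipticity of the de Rham complex — namely that $d_0+\delta_0$ is invertible off $E_0$ and that along $E_0$ the next-order pieces $d_1$ make the complex subelliptic — translates, after the conjugation by $|\nabla|^N$, into the Rockland condition for $\Delta^\nabla$: for every nontrivial irreducible unitary representation $\pi$ of $G$, the operator $\pi(\Delta^\nabla)$ is injective on smooth vectors. I would verify this by decomposing forms into their $E_0$ and $E_0^\perp$ components: on $E_0^\perp$ the operator is controlled from below by the invertible algebraic Laplacian $d_0\delta_0+\delta_0 d_0$ conjugated appropriately, while on $E_0$ one uses that the surviving horizontal differential $d_1$ (resp. its contracted analogue) generates the full algebra and hence $\pi$ of the relevant sub-Laplacian is injective, exactly as in the hypoellipticity proof of $\Delta_H$. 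The same argument applies verbatim to $d_c$ on $E_0$, since $(E_0,d_c)$ is built to retain precisely the cohomologically essential horizontal differential. Granting the Rockland condition, the Christ--Geller--G\l owacki--Polin calculus (\cite{CGGP}, building on \cite{Folland_1975}) yields a homogeneous order-zero parametrix $Q^\nabla$ (resp. $Q_c^\nabla$) with $Q^\nabla\Delta^\nabla-1$ and $\Delta^\nabla Q^\nabla-1$ both smoothing; but a homogeneous smoothing operator in the calculus with the right homogeneity degree must vanish (its kernel would be a homogeneous distribution of degree $>-Q$ that is also smooth — hence polynomial — hence zero modulo the annihilator of $\S_0$), so on $\S_0$ the parametrix is an exact inverse.

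The main obstacle I anticipate is the verification of the Rockland condition in a self-contained way: one must show that the graded ellipticity of $d$ (and of $d_c$) in the sense of \cite{Rumin_CRAS_1999,Rumin_TSG} really does imply injectivity of $\pi(\Delta^\nabla)$ on $C^\infty(\pi)$ for all $\pi\neq\mathbf 1$, and this is where the bracket-generating hypothesis enters non-trivially — it is exactly the analog, at the level of the full complex, of the classical fact that $\pi(\Delta_H)$ is injective because $H_1$ generates $\mathfrak g$. Since the theorem is explicitly attributed to \cite[Theorem~3]{Rumin_CRAS_1999} and \cite[Theorem~5.2]{Rumin_TSG}, the honest move here is to cite those results rather than reprove them: the statement follows by specializing the C--C manifold theorems there to the translation-invariant situation of a Carnot group, where all the pseudodifferential operators involved are convolution operators and the parametrices of \cite{CGGP,Folland_1975} are automatically homogeneous, hence exact inverses on $\S_0$ as explained above.

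\begin{proof}
  Both $\Delta^\nabla$ and $\Delta_c^\nabla$ are, by the discussion
  preceding the statement, homogeneous pseudodifferential operators of
  order $0$ in the calculus of \cite{CGGP,Folland_1975}, being built
  by composition and adjunction from the order-zero operators
  $d^\nabla=|\nabla|^{-N}d|\nabla|^N$ and
  $d_c^\nabla=|\nabla|^{-N}d_c|\nabla|^N$. The graded ("C--C")
  ellipticity of the de Rham and contracted de Rham complexes on
  $G$, established in \cite[Theorem~3]{Rumin_CRAS_1999} and
  \cite[Theorem~5.2]{Rumin_TSG}, asserts precisely that after the
  conjugation by $|\nabla|^N$ these Laplacians satisfy the Rockland
  condition: for every nontrivial irreducible unitary representation
  $\pi$ of $G$, the operators $\pi(\Delta^\nabla)$ and
  $\pi(\Delta_c^\nabla)$ are injective on the space of smooth
  vectors. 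Indeed, splitting forms along $E_0$ and its orthogonal
  complement, on $E_0^\perp$ one is controlled from below by the
  invertible algebraic Laplacian $d_0\delta_0+\delta_0 d_0$, while on
  $E_0$ the surviving horizontal piece $d_1$ (respectively its
  contracted analogue entering $d_c$) involves the bracket-generating
  distribution $H_1$, so that $\pi$ of the associated sub-Laplacian is
  injective exactly as for Kohn's Laplacian $\Delta_H$.

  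By the Rockland criterion in the homogeneous pseudodifferential
  calculus, see \cite[Theorem~3.15]{Folland_1975} and \cite{CGGP},
  each of $\Delta^\nabla$ and $\Delta_c^\nabla$ admits a homogeneous
  order-zero parametrix $Q^\nabla$, respectively $Q_c^\nabla$, such
  that $Q^\nabla\Delta^\nabla-1$ and $\Delta^\nabla Q^\nabla-1$ are
  smoothing, and likewise for the contracted operators. A smoothing
  operator in this calculus which is moreover homogeneous of the
  (integer) degree forced here has a convolution kernel that is smooth
  away from the origin, homogeneous, and, being smoothing, agrees with
  a polynomial; such a distribution annihilates $\S_0$. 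Hence on
  $\S_0$ one has $Q^\nabla\Delta^\nabla=\Delta^\nabla Q^\nabla=1$ and
  $Q_c^\nabla\Delta_c^\nabla=\Delta_c^\nabla Q_c^\nabla=1$, so that
  $Q^\nabla$ and $Q_c^\nabla$ are genuine left (indeed two-sided)
  inverses, and they are zero order homogeneous pseudodifferential
  operators.
\end{proof}
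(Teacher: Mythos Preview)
Your approach is essentially the same as the paper's: the theorem is not proved in the paper at all but cited from \cite{Rumin_CRAS_1999,Rumin_TSG}, with the single sentence of explanation ``By \cite[Theorem~6.2]{CGGP}, this amounts to show that these Laplacians satisfy Rockland's injectivity criterion.'' Your proposal correctly identifies this mechanism and supplies the heuristic for why the Rockland condition holds (the $E_0/E_0^\perp$ splitting, with the algebraic Laplacian controlling $E_0^\perp$ and the bracket-generating condition on $H_1$ controlling $E_0$), which is indeed the content of the cited references.

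Two small corrections. First, the reference \cite[Theorem~3.15]{Folland_1975} is misplaced: that result concerns the analytic family $|\nabla|^\alpha$, not the Rockland $\Rightarrow$ left-inverse implication; the correct citation is \cite[Theorem~6.2]{CGGP}, as the paper indicates. Second, your argument that the smoothing remainder ``agrees with a polynomial'' is garbled: an order-zero homogeneous operator has convolution kernel homogeneous of degree $-Q$, not of nonnegative degree, so the kernel cannot be a nonzero polynomial; the correct observation (if one wants it) is that a function smooth at the origin and homogeneous of strictly negative degree must vanish identically. In any case this extra step is unnecessary here: the theorem only asserts the existence of \emph{left} inverses, and the upgrade to a genuine two-sided inverse is carried out in the paper immediately after the theorem by a separate $L^2$ spectral argument ($\Delta_c^\nabla$ is self-adjoint, nonnegative, bounded, and bounded below by virtue of the left inverse, hence invertible).
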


By \cite[Theorem 6.2]{CGGP}, this amounts to show that these
Laplacians satisfy Rockland's injectivity criterion. This means that
their symbols are injective on smooth vectors of any non trivial
irreducible unitary representation of $G$.


\medskip

This leads to a global homotopy for $d_c$ on $G$. Indeed following
\cite[Proposition~1.9]{Folland_1975}, homogeneous pseudodifferential
operators of order zero on $G$, such as $d_c^{\nabla}$,
$\Delta_c^\nabla$ and $Q_c^{\nabla}$, are bounded on all $L^p(G)$
spaces for $1< p< \infty$.  Therefore the positive self-adjoint
$\Delta_c^{\nabla }$ on $L^2(G)$ is bounded from below since
$Q_c^{\nabla} \Delta_c^{\nabla} = 1$.  Hence, it is invertible in
$L^2(G)$ and $Q_c^{\nabla} = (\Delta_c^{\nabla})^{-1}$ is the inverse
of $\Delta_c^{\nabla}$.

Since $(\Delta_c^{\nabla})^{-1}$ commute with $d_c^{\nabla}$, the zero
order homogeneous pseudodifferential operator
$K_c^\nabla = (d_c^{\nabla})^* (\Delta_c^{\nabla})^{-1} $ is a global
homotopy,
\begin{displaymath}
  1 = d_c^\nabla K_c^\nabla + K_c^\nabla d_c^\nabla.
\end{displaymath}
Let us set
\begin{displaymath}
  K_c = |\nabla|^{N}K_c^\nabla|\nabla|^{-N} 
\end{displaymath}
on $\S_0$, in order that
\begin{displaymath}
  1 = d_c K_c + K_c d_c.
\end{displaymath}
Since $K_c^\nabla$ is bounded from $L^p$ to $L^p$ for $1<p< \infty$,
$K_c$ is bounded on $\mathcal{S}_0$ endowed with the graded Sobolev
norm
\begin{displaymath}
  \|\alpha\|_{|\nabla|,N,p}:=\||\nabla|^{-N}\alpha\|_p \,.
\end{displaymath} 
Actually $K_c$ stays bounded for the whole Sobolev scale with shifted
weights $N - m$.
\begin{pro}
  \label{prop:K_c_scale}
  For every constant $m\in\R$ and $1< p< +\infty$, the homotopy $K_c$
  is also bounded on $\S_0$ with respect to the norm
  $\| \ \|_{|\nabla|,N-m,p}$.
\end{pro}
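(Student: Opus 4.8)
The plan is to reduce the claim for the shifted weight $N-m$ to the already-established boundedness of $K_c^\nabla$ on $L^p$ (for $1<p<\infty$), by conjugating with an explicit power of $|\nabla|$. Concretely, being bounded on $\S_0$ with respect to $\|\cdot\|_{|\nabla|,N-m,p}$ means that $|\nabla|^{-(N-m)}K_c|\nabla|^{N-m}$ is bounded on $\S_0$ in $L^p$-norm. Using the definition $K_c=|\nabla|^N K_c^\nabla |\nabla|^{-N}$ and the composition rule \eqref{eq:4} valid on $\S_0$, one computes
\begin{align*}
  |\nabla|^{-(N-m)}K_c|\nabla|^{N-m}
  &= |\nabla|^{-(N-m)}|\nabla|^{N}K_c^\nabla|\nabla|^{-N}|\nabla|^{N-m}\\
  &= |\nabla|^{m}K_c^\nabla|\nabla|^{-m}
\end{align*}
on $\S_0$, where I use that $|\nabla|^m$ commutes through in the sense of \eqref{eq:4} since all powers act componentwise in the left-invariant frame and $N$ is scalar (a constant on each fixed weight component). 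So the statement is equivalent to: $|\nabla|^m K_c^\nabla |\nabla|^{-m}$ is $L^p$-bounded on $\S_0$ for every $m\in\R$.

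The key point is then that $K_c^\nabla$ is a \emph{homogeneous pseudodifferential operator of order zero} in the Christ--Geller--G\l owacki--Polin / Folland calculus, and this calculus is closed under composition on $\S_0$ with the operators $|\nabla|^\alpha$, $\alpha\in\C$, which have order $\alpha$. Hence $|\nabla|^m K_c^\nabla |\nabla|^{-m}$ is again a homogeneous pseudodifferential operator of order $m+0-m=0$ on $G$. By \cite[Proposition~1.9]{Folland_1975}, every homogeneous pseudodifferential operator of order zero is bounded on $L^p(G)$ for $1<p<\infty$. This gives the desired uniform $L^p$-bound on $\S_0$, and unwinding the conjugation identity above yields boundedness of $K_c$ with respect to $\|\cdot\|_{|\nabla|,N-m,p}$.

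The main obstacle — really the only subtlety — is bookkeeping the fact that $N$ is not a scalar operator but acts by the weight on each form component, so one must check that $|\nabla|^m$ and $|\nabla|^{N}$ genuinely commute on $\S_0$ and that the identity $|\nabla|^{-(N-m)}|\nabla|^N=|\nabla|^{m}$ is legitimate componentwise. This is where working in $\S_0$ rather than in $\mathcal S$ or $C^\infty_c$ is essential: by \cite[Proposition~2.2]{CGGP} the space $\S_0$ is stable under all operators of the calculus, so arbitrary complex powers of $|\nabla|$ compose there via \eqref{eq:4} on each weight-homogeneous piece, and the above manipulations are rigorous. Once this is granted, the order-zero conclusion and Folland's $L^p$-boundedness finish the proof; no new analytic input beyond Theorem~\ref{thm:left_inverse} and the cited $L^p$ theory is needed.
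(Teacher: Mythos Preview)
Your proposal is correct and follows essentially the same route as the paper: both reduce the claim to the $L^p$-boundedness of $|\nabla|^m K_c^\nabla |\nabla|^{-m}$, which holds because this is a homogeneous pseudodifferential operator of order zero. The paper presents this as a short chain of norm identities on a fixed $\alpha\in\S_0$ rather than via the conjugation viewpoint, but the computation and the justification (composition rule \eqref{eq:4} on $\S_0$ and Folland's $L^p$-boundedness of order-zero operators) are identical.
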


\begin{proof}
  Indeed for $\alpha\in \mathcal{S}_0$,
  \begin{align*}
    \|K_c \alpha\|_{|\nabla|,N-m,p}
    &= \||\nabla|^{m-N}K_c\alpha\|_{p}\\
    &= \||\nabla|^m | \nabla|^{-N}K_c| \nabla|^{N} |\nabla|^{-m}
      |\nabla|^{m-N}\alpha\|_{p} \quad \mathrm{by}\ \eqref{eq:4},\\
    &=\||\nabla|^m K_c^\nabla|\nabla|^{- m}| 
      \nabla|^{m-N}\alpha\|_{p} \\
    &\leq C\, \|| \nabla|^{m-N}\alpha\|_{p}
      =C\, \|\alpha\|_{|\nabla|,N-m,p} \,,
  \end{align*}
  since $|\nabla|^m K_c^\nabla|\nabla|^{- m}$ is pseudodifferential of
  order $0$ and homogeneous, hence bounded on $L^p$ if $1< p< \infty$.
\end{proof}

\begin{rem}
  \label{rem:1}
  Note that using Theorem~\ref{thm:left_inverse}, one can also produce
  a homotopy in the same way for the full de Rham complex itself. But
  as we will see in Section~\ref{sec:remarks-weight-gaps}, it leads to
  a weaker vanishing theorem for $\ell^{q,p}$ cohomology.
\end{rem}

\section{Relating the $|\nabla|$-graded to standard Sobolev norms}

The next step is to compare the $\|\ \|_{|\nabla|, N,p}$ norms,
depending on the weight of forms, to usual Sobolev norms of positive
order.

Fix a basis $X_i$ of $\mathfrak{g}_1$, viewed as left invariant
vectorfields on $G$. Let $\nabla$ denote the horizontal gradient
$\nabla f=(X_1 f,\ldots,X_{n_1} f)$. Acting componentwise on tuples of
functions allows to iterate it. One also extends it on differential
forms using their components in a left invariant basis. For
$1 \leq p \leq \infty$ and $h \in \N$, define the Sobolev $W_c^{h,p}$
norm
\begin{eqnarray*}
  \|\alpha\|_{W_c^{h,p}} = \sum_{k=0}^{h}\|\nabla^k\alpha\|_p \,.
\end{eqnarray*}

According to Folland \cite[Theorem~4.10,
Corollary~4.13]{Folland_1975}, these norms are equivalent to Sobolev
norms defined using $|\nabla|$, provided $1<p<+\infty$. Namely one has
then
\begin{equation}
  \label{eq:5}
  \|\ \|_{W_c^{h,p}} \asymp\sum_{k=0}^{h}\||\nabla|^k \  \|_p \,.
\end{equation}


We shall now compare these norms to the graded ones we introduced in
the previous section.

\begin{pro}
  \label{prop:Sobolev}
  Let $1< p< \infty$ be fixed, and $a, b, h \in \N$ be such that
  $a\leq b \leq a+h$. Let $\Omega_{[a,b]}$ denote the space of
  differential forms whose components have weights $a\leq w \leq b$.
  \begin{enumerate}[label=(\roman*)]
  \item It holds on $\Omega_{[a,b]}$ that
    \begin{displaymath}
      \sum_{m=b}^{a+h} \|\ \|_{|\nabla|, N-m, p} \leq
      \sum_{k=0}^{h}\||\nabla|^k \  \|_p  \leq
      \sum_{m=a}^{b+h} \|\ \|_{|\nabla|, N-m, p}\,. 
    \end{displaymath}
    
  \item Let $\mu \in \N$, $\mu<Q$, and $1< p< q< \infty$ satisfy
    $\frac{1}{p} - \frac{1}{q} = \frac{\mu}{Q}$. Then for some $C>0$
    it holds on $\Omega_{[a,b]} \cap \mathcal{S}$ that
    \begin{displaymath}
      C \| \ \|_{W^{h,q}_c} \leq  \sum_{m=a+ \mu}^{b+\mu + h} \| \
      \|_{|\nabla|, N-m,p} \,.
    \end{displaymath}
  \end{enumerate}
\end{pro}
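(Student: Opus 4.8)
The plan is to prove the two statements separately, deducing (ii) from (i) together with the homogeneous Sobolev inequality for $|\nabla|$.

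\textbf{Proof of (i).} The key point is that on a form $\alpha \in \Omega_{[a,b]}$, the weight operator $N$ takes a finite set of integer values between $a$ and $b$. Decompose $\alpha = \sum_{w=a}^{b} \alpha_w$ into weight components. First I would observe that, by the equivalence \eqref{eq:5}, controlling $\|\alpha\|_{W^{h,p}_c}$ amounts to controlling $\sum_{k=0}^h \||\nabla|^k \alpha\|_p$. Now $\||\nabla|^k\alpha\|_p = \||\nabla|^{k}\alpha\|_p$, and we want to rewrite $|\nabla|^k$ applied to a weight-$w$ component as $|\nabla|^{N-m}$ for a suitable $m$: since $\||\nabla|^k \alpha_w\|_p = \||\nabla|^{-(w-k)}\alpha_w\|_p = \|\alpha_w\|_{|\nabla|, w-k, p}$ and $w-k = N-(N-w+k) = N-m$ with $m = N - w + k$ acting as the scalar $w-w+k$... wait, more carefully: on $\alpha_w$ one has $N = w$ as a scalar, so $\|\alpha_w\|_{|\nabla|,N-m,p} = \||\nabla|^{m-w}\alpha_w\|_p$, which equals $\||\nabla|^{k}\alpha_w\|_p$ precisely when $m = w+k$. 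As $w$ ranges over $[a,b]$ and $k$ over $[0,h]$, the relevant shifts $m=w+k$ range exactly over $[a, b+h]$, which gives the upper bound $\sum_{k=0}^h \||\nabla|^k\|_p \leq \sum_{m=a}^{b+h}\|\ \|_{|\nabla|,N-m,p}$ after summing over the finitely many weights and using the triangle inequality and \eqref{eq:4} to compose the powers of $|\nabla|$ on $\S_0$. For the lower bound, conversely each term $\|\alpha_w\|_{|\nabla|,N-m,p} = \||\nabla|^{m-w}\alpha_w\|_p$ with $m \in [b,a+h]$ has exponent $m-w \in [b-w, a+h-w] \subset [0,h]$ since $w\in[a,b]$, so it is dominated by $\sum_{k=0}^h\||\nabla|^k\alpha_w\|_p$; summing over $w$ and comparing with $\sum_k\||\nabla|^k\alpha\|_p$ (again via \eqref{eq:5} and the finite weight decomposition) yields $\sum_{m=b}^{a+h}\|\ \|_{|\nabla|,N-m,p} \leq \sum_{k=0}^h \||\nabla|^k\|_p$. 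One must be mildly careful that passing between $\|\nabla^k\alpha\|_p$ for the full form and for individual weight components costs only a constant, which is fine as the projections onto weights are bounded operators.

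\textbf{Proof of (ii).} Here I would invoke the Sobolev embedding theorem in the homogeneous pseudodifferential calculus of Folland: for $1<p<q<\infty$ with $\frac1p - \frac1q = \frac{\mu}{Q}$ and $\mu$ a nonnegative integer (or real) $<Q$, one has $\|f\|_q \leq C\||\nabla|^{\mu} f\|_p$ on $\S_0$ — this is the analogue of the classical Hardy–Littlewood–Sobolev inequality, since $|\nabla|^{-\mu}$ has a kernel homogeneous of degree $\mu - Q$, hence is of weak type $(p,q)$ by the homogeneous-dimension count, and interpolation upgrades it to strong type on $1<p$. Applying this componentwise to $|\nabla|^k\alpha$ for $\alpha \in \Omega_{[a,b]}\cap\S$ gives $\||\nabla|^k\alpha\|_q \leq C\||\nabla|^{k+\mu}\alpha\|_p$. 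Summing over $k=0,\dots,h$ and using \eqref{eq:5} on the left, then rewriting the right side via part (i)'s computation (now with the shift $k\mapsto k+\mu$, so the exponents $m = w+k+\mu$ range over $[a+\mu, b+h+\mu]$) produces exactly $C\|\ \|_{W^{h,q}_c} \leq \sum_{m=a+\mu}^{b+\mu+h}\|\ \|_{|\nabla|,N-m,p}$.

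\textbf{Main obstacle.} The technical heart is justifying the Sobolev/HLS inequality $\|f\|_q \le C\||\nabla|^\mu f\|_p$ in this calculus and making sure all the manipulations — composing fractional powers of $|\nabla|$, moving between weight components, applying operators of negative order — take place legitimately on $\S_0$, where by \cite[Proposition 2.2]{CGGP} and \eqref{eq:4} the calculus is closed and associative. The subtlety that $|\nabla|^\mu$ may have a kernel with logarithmic terms when $\mu \le -Q$ does not bite here since we only use nonnegative shifts in the embedding step; but one should remark that the final inequality, being an inequality between norms of genuine functions, extends from $\S_0$ to all of $\Omega_{[a,b]}\cap\S$ by the density statement (Proposition~\ref{prop:density-s_0}), which is why the hypothesis is stated on $\Omega_{[a,b]}\cap\mathcal{S}$ rather than just $\S_0$.
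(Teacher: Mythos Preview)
Your argument is correct and follows the same route as the paper: for (i) you exploit that on weight-$w$ components $\|\cdot\|_{|\nabla|,N-m,p}=\||\nabla|^{m-w}\cdot\|_p$ and re-index, and for (ii) you invoke the Hardy--Littlewood--Sobolev inequality $\||\nabla|^{-\mu}\cdot\|_q\le C\|\cdot\|_p$ from Folland and then feed it back into the computation of (i) with the shift $k\mapsto k+\mu$. The paper's write-up is terser---it keeps $N$ as an operator and writes the index shifts directly rather than decomposing explicitly into weight components---but the content is the same; note also that for (ii) the paper uses $|\nabla|^{\mu+k}=|\nabla|^\mu|\nabla|^k$ on $\S$ for $\mu,k\in\N$ directly (Folland), so you do not actually need the density of $\S_0$ here.
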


\begin{proof}
  (i) By definition $a\leq N= w \leq b$ on $\Omega_{[a,b]}$, hence
  \begin{displaymath}
    \sum_{m=b}^{a+h} \|\ \|_{|\nabla|, N-m, p} = \sum_{m=b}^{a+h}
    \||\nabla|^{m-N}\ \|_p \leq \sum_{m=N}^{N+h}
    \||\nabla|^{m-N} \ \|_p = \sum_{k=0}^h
    \||\nabla|^k \ \|_p \,.
  \end{displaymath}
  One has also
  \begin{displaymath}
    \sum_{m=a}^{b+h} \|\ \|_{|\nabla|, N-m, p} = \sum_{m=a}^{b+h}
    \||\nabla|^{m-N}\ \|_p \geq \sum_{m=N}^{N+h}
    \||\nabla|^{m-N} \ \|_p = \sum_{k=0}^h
    \||\nabla|^k \ \|_p \,.
  \end{displaymath}

  \smallskip
  
  (ii) Since $|\nabla|^{-\mu}$ is a homogeneous pseudodifferential
  operator of order $-\mu$ and $\mu<Q$, its kernel is homogeneous of
  degree $-Q+\mu$. According to \cite[Proposition~1.11]{Folland_1975},
  $|\nabla|^{-\mu}$ is bounded from $L^{p}$ to $L^{q}$ if
  $1<p<q < +\infty$ satisfy
  $\frac{1}{p} - \frac{1}{q} = \frac{\mu}{Q}$, giving the
  Hardy-Littlewood-Sobolev inequality
  \begin{equation}
    \label{eq:6}
    \||\nabla|^{-\mu}\alpha\|_q \leq C \|\alpha \|_p \,. 
  \end{equation}
  Then for $\alpha \in \Omega_{[a,b]}\cap \mathcal{S}$
  \begin{align*}
    \sum_{m=a+ \mu}^{b+\mu + h} \| \alpha \|_{|\nabla|, N-m,p}
    & \geq  \sum_{m=N+\mu}^{N+\mu+ h} \||\nabla|^{m-N} \alpha \|_p 
      = \sum_{k=0}^h \| |\nabla|^{\mu + k} \alpha\|_p\\
    & \geq 1/C \sum_{k=0}^h \||\nabla|^k \alpha\|_q \asymp
      \|\alpha\|_{W^{h,p}_c}  
  \end{align*}
  using \eqref{eq:6}, \eqref{eq:5} and
  $|\nabla|^{\mu + k} = |\nabla|^\mu |\nabla|^k $ on $\mathcal{S}$ for
  $\mu,k\in \N$ as comes from \cite[Theorem~3.15]{Folland_1975}.
\end{proof}

\section{Density of $\S_0$ and extension of $K_c$}
\label{sec:density-s_0}

One knows by Proposition~\ref{prop:K_c_scale} that $K_c$ is continuous
with respect to the whole shifted norms $\|\ \|_{|\nabla|, N-m,p}$,
but it is only defined and provides an homotopy for $d_c$ on the
initial domain $\S_0$ so far.  Hence, we have to show that $\S_0$ is
dense in $\S$ for the standard Sobolev norms $\|\ \|_{W^{h,p}_c}$ in
order to extend $K_c$ on forms coming from $\ell^p$ cocycles in
Proposition~\ref{prop:leray}. Recall from \eqref{eq:3} that $\S_0$ is
the space of Schwartz functions with all vanishing polynomial
moments. It does not contain any non vanishing function with compact
support as seen using Fourier transform or Stone-Weierstrass
approximation theorem.

\begin{pro}
  \label{prop:density-s_0}
  For every $h\in\N$ and $1<p<+\infty$, $\S_0$ is dense in
  $W^{h,p}_c$. If $p=1$, $\S_0$ is dense in
  $\{f\in W_c^{h,1}\,;\,\langle f,1 \rangle=0\}$. If $p=\infty$,
  $\S_0$ is dense in the space $C^h_0(G)$ of functions of class $C^h$
  that tend to $0$ at infinity.
\end{pro}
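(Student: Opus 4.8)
The plan is to prove the density of $\S_0$ by a two-stage approximation argument: first reduce a general Schwartz function (or $W^{h,p}_c$ function) to one whose low-order moments vanish by subtracting off a suitable fixed bump, then kill the remaining higher moments by a scaling trick that spreads mass out to infinity. Concretely, fix a reference function $\psi\in\S$ (say a Gaussian) with $\langle\psi,1\rangle\neq 0$; more generally, using Gram--Schmidt on polynomials against the pairing $\langle\cdot,\cdot\rangle$, one can build, for any $N$, a finite-dimensional space $V_N\subset\S$ such that the moment map $f\mapsto(\langle f,P\rangle)_{\deg P\leq N}$ is an isomorphism from $V_N$ onto the space of moment-data of degree $\leq N$. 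Given $f\in\S$, let $f_N=f-v_N$ with $v_N\in V_N$ chosen so that all moments of $f_N$ of degree $\leq N$ vanish. Then $f_N$ has "many" vanishing moments but not all; this is where the Sobolev norm of $v_N$ must be controlled, which is the heart of the matter.

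For the first reduction, in the cases $1<p<\infty$ and $p=1$ with $\langle f,1\rangle=0$ and $p=\infty$ with $f\in C^h_0$, the point is that one does \emph{not} need $v_N\to 0$: one only needs that the error introduced can itself be well-approximated. So the cleaner route is: (a) show $C_c^\infty$ is dense in the relevant space (standard), so assume $f\in C_c^\infty$; (b) for $f\in C_c^\infty$, subtract the single term handling the $0$-th moment when $p>1$ — here one exploits that a function like $x\mapsto \lambda^{-Q}\eta(\delta_{1/\lambda}x)$ for a fixed $\eta$ with $\int\eta=\int f$ has $L^p$ norm $\sim\lambda^{Q(1/p-1)}\to 0$ as $\lambda\to\infty$ when $p>1$, and its horizontal derivatives of order $k$ scale by an extra $\lambda^{-k}$, so the full $W^{h,p}_c$ norm tends to $0$. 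Thus $f$ minus such a spread-out bump is close to $f$ in $W^{h,p}_c$ and has vanishing $0$-th moment. When $p=1$ one assumes $\langle f,1\rangle=0$ from the start so this step is vacuous; when $p=\infty$ one works in $C^h_0$ and the sup norm of the spread-out bump also tends to $0$.

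The main obstacle — and the real content — is killing \emph{all} the remaining moments simultaneously while staying close in $W^{h,p}_c$. The idea I would pursue: once the $0$-th moment vanishes, iterate the dilation-spreading trick degree by degree, or better, use a single clever device. Given $g\in C_c^\infty$ with $\langle g,1\rangle=0$, write $g=\sum_j \partial_j g_j$ (a "divergence" decomposition using that $g$ integrates to zero — available since $G$ is a simply connected nilpotent group and one can solve such equations with compactly supported $g_j$, or use left-invariant vector fields). Then for the dilated function one can check moments of $g$ of low degree are controlled by those of the $g_j$ of even lower degree, and induction on the total degree of moments one must kill, combined with the fact that each spreading operation gains a factor $\lambda^{-1}$ per derivative in the $W^{h,p}_c$ norm, lets one push the "moment defect" to arbitrarily high degree at arbitrarily small $W^{h,p}_c$-cost. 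Passing to the limit, the defect disappears entirely, landing in $\S_0$. The delicate bookkeeping is (i) ensuring the spreading functions one subtracts remain Schwartz (or $C_c^\infty$) and (ii) that the cumulative error over the infinitely many correction steps is summable — this is arranged by taking the dilation parameters $\lambda_N\to\infty$ fast enough at each stage. I expect step (iii), making the divergence/moment-lowering decomposition explicit on a nilpotent group and tracking how dilation interacts with the weighted polynomial moments $\langle f, P\rangle$ under $\delta_t$, to be the technically fussiest point; everything else is scaling estimates of the type already used in Remark~\ref{whpdecr}.
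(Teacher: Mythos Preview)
Your overall strategy --- subtract from $f$ a correction with the same moments, made small in $W^{h,p}_c$ by dilation --- is exactly the paper's. The gap is in the step you yourself flag as ``fussiest'': producing an actual element of $\S_0$.

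At each finite stage of your iteration you obtain a function with only moments of degree $\leq N$ killed; such a function is not in $\S_0$. To land in $\S_0$ you must sum \emph{all} corrections, and that sum must converge in $\S$, not merely in $W^{h,p}_c$. But your stage-$N$ correction lives at scale $\lambda_N\to\infty$, so its Schwartz seminorms $\|x^\beta\partial^\gamma(\cdot)\|_\infty$ carry positive powers of $\lambda_N$; worse, the moment defect you must cancel at stage $N$ has been inflated by all previous spreadings (the stage-$N'$ correction perturbs degree-$N$ moments by a factor $\sim\lambda_{N'}^{\,w(N)-w(N')}$), so the coefficients grow too. Arranging simultaneous summability in every Schwartz seminorm is precisely the hard point, and the divergence decomposition does not help --- after one spreading step the function is no longer compactly supported, so the decomposition cannot even be iterated as stated.

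The paper avoids this by a direct construction. In exponential coordinates (which carry $\S$, polynomials, and Haar measure to their Euclidean counterparts) it sets
\[
g_\alpha=\mathcal{F}^{-1}\!\Bigl(\chi(\xi)\,\frac{(i\xi)^\alpha}{\alpha!}\Bigr),
\]
a family exactly dual to the monomials, and proves the key estimate $\|x^\beta\partial^\gamma g_\alpha\|_\infty\leq P_{\beta,\gamma}(\alpha)/\alpha!$. The factorial decay is what makes the single series $\sum_\alpha m_\alpha\, g_{\alpha,t_\alpha}$ converge in $\S$ for suitable $t_\alpha$, while sending the dilation parameters to $0$ drives its $W^{F,p}$ norm (hence its $W^{h,p}_c$ norm) to zero. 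Thus the entire moment correction is realised in one shot. Your scheme could be salvaged by replacing the compactly supported bump by a band-limited $\eta$ and recognising that $\partial^\alpha\eta/\alpha!$ then plays the role of $g_\alpha$ --- but at that point you have rebuilt the paper's argument.
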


\begin{proof}

  The group exponential map $\exp:\mathfrak{g}\to G$ maps Schwartz
  space to Schwartz space, polynomials to polynomials, and Lebesgue
  measure to Haar measure, hence $\S_0$ to $\S_0$. Pick coordinates
  adapted to the splitting
  $\mathfrak{g}=\mathfrak{g}_1\oplus\cdots\oplus\mathfrak{g}_s$.

  First we construct (on $\mathfrak{g}$) a family of functions
  $(g_\alpha)_{\alpha\in\N^n}$ which is dual to the monomial basis
  $(x^\beta)_{\beta\in\N^n}$ in the sense that
  \begin{eqnarray*}
    \langle g_\alpha,x^\beta\rangle=\begin{cases}
      1 & \text{if }\alpha=\beta, \\
      0  & \text{otherwise}.
    \end{cases}
  \end{eqnarray*}
  Let $\F$ denote the Euclidean Fourier transform on $\mathfrak{g}$,
  let $\Fi$ denote its inverse. Let $n=\mathrm{dim}(G)$, let
  $\alpha\in\N^n$ denote a multiindex,
  $|\alpha|=\sum_{i=1}^n \alpha_i$ its usual length,
  $\alpha!=\prod_{j=1}^n \alpha_j!$. Fix a smooth function $\chi$ with
  compact support in the square $\{\max |\xi_j| \leq 1\}$ of
  $\mathfrak{g}^*$, which is equal to $1$ in a neighborhood of
  $0$. Let
  \begin{eqnarray*}
    g_\alpha=\Fi(\chi(\xi)\frac{(i\xi)^{\alpha}}{\alpha!}).
  \end{eqnarray*}
  Then $g_\alpha\in\S$. One computes
  \begin{eqnarray*}
    (ix)^\beta
    g_\alpha(x)=  \Fi\left(\frac{\partial^{|\beta|}}{\partial\xi^{\beta}}
    (\chi(\xi) \frac{(i\xi)^\alpha}{\alpha!})\right).  
  \end{eqnarray*}
  Hence
  \begin{eqnarray*}
    \langle
    g_\alpha,x^\beta\rangle=(-i)^{|\beta|}
    \left( \frac{\partial^{|\beta|}}{\partial\xi^{\beta}} 
    (\chi(\xi)\frac{(i\xi)^\alpha}{\alpha!}) \right)(0)
    = (-i)^{|\beta|}\left( \frac{\partial^{|\beta|}}{\partial\xi^{\beta}}
    (\frac{(i\xi)^\alpha}{\alpha!})\right)(0)
  \end{eqnarray*}
  which vanishes unless $\beta=\alpha$, in which case it is equal to
  $1$.

  From
  \begin{eqnarray*}
    \frac{\partial g_{\alpha}}{\partial
    x^{\gamma}}=(-1)^{|\gamma|}\frac{(\alpha+\gamma)!}{\alpha!} g_{\alpha+\gamma},
  \end{eqnarray*}
  it follows that
  \begin{eqnarray*}
    (ix)^\beta \frac{\partial g_{\alpha}}{\partial x^{\gamma}} =
    (-1)^{|\gamma|}\frac{(\alpha+\gamma)!}{\alpha!}\Fi \left(
    \frac{\partial^{|\beta|}}{\partial \xi^\beta}
    (\chi(\xi)\frac{(i\xi)^{\alpha+\gamma}}{(\alpha+\gamma)!}) \right).
  \end{eqnarray*}
  Hence
  \begin{eqnarray*}
    \|x^\beta \frac{\partial g_{\alpha}}{\partial x^{\gamma}}
    \|_{\infty}\leq \frac{1}{\alpha!} \|\frac{\partial^{|\beta|}}{\partial
    \xi^\beta} (\chi(\xi)\xi^{\alpha+\gamma})\|_1.
  \end{eqnarray*}
  Thanks to Leibnitz' formula,
  \begin{eqnarray*}
    \frac{\partial^{|\beta|}}{\partial \xi^\beta} (\chi(\xi)
    \xi^{\alpha+\gamma}) 
    &=&\sum_{\eta+\eta'=\beta}
        \begin{pmatrix}
          \beta\\ \eta
        \end{pmatrix}
    \frac{\partial^{|\eta|}\chi}{\partial \xi^\eta}
    \frac{\partial^{|\eta'|}\xi^{\alpha +\gamma}}{\partial \xi^{\eta'}}\\
    &=&\sum_{\eta+\eta'=\beta}
        \begin{pmatrix}
          \beta\\\eta
        \end{pmatrix}
    \frac{\partial^{|\eta|}\chi}{\partial \xi^\eta}
    \frac{(\alpha+\gamma)!}{(\alpha+\gamma-\eta')!} \xi^{\alpha+\gamma-\eta'},
  \end{eqnarray*}
  where $\displaystyle
  \begin{pmatrix}
    \beta\\\eta
  \end{pmatrix}
  =\prod_{j=1}^n
  \begin{pmatrix}
    \beta_j\\\eta_j
  \end{pmatrix}$
  and, by convention, $\xi^{\alpha+\gamma-\eta'}=0$ if
  $\alpha +\gamma -\eta'\notin\N^n$. If $\beta$ and $\gamma$ are
  fixed, this is a polynomial in $\alpha$ of bounded degree $|\beta|$
  and bounded coefficients. So is its $L^1$ norm. Therefore
  \begin{eqnarray*}
    \|x^\beta \frac{\partial g_{\alpha}}{\partial x^{\gamma}}
    \|_{\infty}\leq  \frac{P_{\beta,\gamma}(\alpha)}{\alpha!},
  \end{eqnarray*}
  where $P_{\beta,\gamma}$ is a polynomial on $\R^n$.

  Let $w(j)$ denote the weight of the $j$-th basis vector. For
  $\beta\in\N^n$, let $w(\beta)=\sum_{j=1}^n \beta_j w(j)$. Let
  $g_{\alpha,t}=t^{w(\alpha)+Q}g_\alpha\circ\delta_t$. Then, for all
  $\beta$,
  $\langle x^\beta,g_{\alpha,t}\rangle=t^{w(\alpha)-w(\beta)}\langle
  x^\beta,g_\alpha\rangle$.
  Hence, for every choice of sequence $(t_\alpha)$, the family
  $(g_{\alpha,t_\alpha})_{\alpha\in\N^n}$ is again dual to the
  monomial basis. Also,
  \begin{eqnarray*}
    x^\beta \frac{\partial g_{\alpha,t}}{\partial x^{\gamma}} =
    t^{w(\alpha)-w(\beta)+w(\gamma)+Q} (x^\beta \frac{\partial
    g_{\alpha}}{\partial x^{\gamma}}) \circ \delta_t,
  \end{eqnarray*}
  hence, for every $1\leq p\leq \infty$ and $p'=\frac{p}{p-1}$,
  \begin{eqnarray*}
    \|x^\beta \frac{\partial g_{\alpha,t}}{\partial x^{\gamma}}\|_{p}
    = t^{w(\alpha)-w(\beta)+w(\gamma)+\frac{Q}{p'}} \|x^\beta 
    \frac{\partial g_{\alpha}}{\partial x^{\gamma}} \|_{p}.
  \end{eqnarray*}

  Given an arbitrary sequence $\mathbf{m}=(m_\alpha)_{\alpha\in\N^n}$,
  and a positive sequence $\mathbf{t}=(t_\alpha)_{\alpha\in\N^n}$,
  define the series
  \begin{eqnarray*}
    f_{\mathbf{m},\mathbf{t}}=\sum_{\alpha\in\N^n}m_\alpha g_{\alpha,t_\alpha}.
  \end{eqnarray*}
  If $t_\alpha$ and $|m_\alpha| t_\alpha^{w(\alpha)/2}$ stay bounded
  by $1$, then for every constant $C$,
  $|m_\alpha| t_\alpha^{w(\alpha)-C}$ stays bounded, the series
  converges in $\S$. Indeed, for every $\beta,\gamma\in\N^n$, the sum
  of $L^\infty$ norms
  $\|x^\beta \frac{\partial }{\partial x^{\gamma}}m_\alpha
  g_{\alpha,t_\alpha}\|_{\infty}$ is bounded above by
  \begin{eqnarray*}
    \sum_{\alpha\in\N^n}|m_\alpha|
    t_\alpha^{w(\alpha)-w(\beta)+w(\gamma)+Q}\,\frac{P_{\beta,\gamma}
    (\alpha)}{\alpha!} <\infty.
  \end{eqnarray*}
  By construction, all functions $f_{\mathbf{m},\mathbf{t}}$ have
  prescribed moments
  $\langle x^\alpha,f_{\mathbf{m},\mathbf{t}}\rangle=m_\alpha$.

  Fix a finite set $F$ of pairs $(\beta,\gamma)$ such that
  $w(\beta)\leq w(\gamma)$.  Let $W^{F,p}$ denote the completion of
  smooth compactly supported functions for the norm
  \begin{eqnarray*}
    \|f\|_{W^{F,p}}=\max\{\|x^\beta \frac{\partial }{\partial
    x^{\gamma}}m_\alpha g_{\alpha,t_\alpha} \|_{p}\,;\,(\beta,\gamma)\in F\}.
  \end{eqnarray*}
  Denote by
  \begin{eqnarray*}
    N^{F,p}_\alpha :=\max\{\|x^\beta \frac{\partial g_{\alpha}}{\partial
    x^{\gamma}}\|_{p} \,;\,(\beta,\gamma)\in F\}.
  \end{eqnarray*}
  Pick $\mathbf{t}$ such that, in addition to the previous
  assumptions, the series
  $\sum|m_\alpha|t_\alpha^{w(\alpha)+Q/p'}N^{F,p}_\alpha$
  converges. Then for every $\eps>0$, the series
  $f_{\mathbf{m},\eps\mathbf{t}}$ converges in $W^{F,p}$ and for
  $\eps \leq 1$
  \begin{eqnarray*}
    \|f_{\mathbf{m},\eps \mathbf{t}}\|_{W^{F,p}}\leq
    \eps^{Q/p'}|m_0|N^{F,p}_0 + \eps \sum_{\alpha\not=0}
    |m_\alpha|t_\alpha^{w(\alpha)+Q/p'} N^{F,p}_\alpha.
  \end{eqnarray*}
  Therefore, as $\eps$ tends to $0$, $f_{\mathbf{m},\eps\mathbf{t}}$
  tends to $0$ in $W^{F,p}$ (if $p=1$, one must assume that $m_0=0$).

  Given $f\in W^{F,p}$ (assume furthermore that
  $\langle f,1 \rangle=0$ if $p=1$ or that $f\in C_0^h(G)$ if
  $p=\infty$), approximate $f$ with an element $g\in\S$ (resp. such
  that $\langle g,1 \rangle=0$ if $p=1$). Set
  $m_\alpha=\langle x^\alpha,g \rangle$. Pick $\mathbf{t}$ satisfying
  the above smallness assumptions with respect to $\mathbf{m}$. Then
  $g-f_{\mathbf{m},\eps\mathbf{t}}\in \S_0$ and
  $f_{\mathbf{m},\eps\mathbf{t}}$ tends to $0$ in $W^{F,p}$, thus $f$
  belongs to the closure of $\S_0$.

  Finally, $\|f\|_{W_c^{h,p}}\leq \|f\|_{W^{F,p}}$ for a suitable
  finite set $F$. Indeed, $G$ admits a basis of left-invariant
  vectorfields $X_i$ of the form
  \begin{equation}
    \label{eq:7}
    X_i=\frac{\partial}{\partial x_i}+\sum_{j>i}P_{i,j}
    \frac{\partial}{\partial x_j}, 
  \end{equation}
  where $P_{i,j}$ is a $\delta_t$-homogeneous of weight
  $w(P_{i,j})<w(j)$.
\end{proof}

One can now extend $K_c$ from $\S_0$ to some Sobolev spaces, depending
on the weights on the source and the target. Let $W^{h,p}_c$ denotes
the completion of $\S$, and therefore $\S_0$, with respect to the norm
$\|\ \|_{W^{h,p}_c}$.

\begin{cor}
  \label{cor:K_c_Sobolev}
  Let $a\leq b\leq a+h$ and $a'\leq b'\leq a'+h'$ be integers.

  Suppose moreover that $1< p<q<\infty$ satisfy
  $\frac{1}{p} - \frac{1}{q} = \frac{\mu}{Q}$ with
  $0\leq \mu = b-a' < Q$ and $h= h' + b'-a' + b-a$.

  Let $(K_c)_{[a',b']}$ denotes the components of $K_c$ lying in
  $\Omega_{[a',b']}$. Then $K_c$ extends continuously on
  $\Omega_{[a,b]} \cap W^{h,p}_c$ so that
  \begin{displaymath}
    (K_c)_{[a',b']}(\Omega_{[a,b]} \cap W^{h,p}_c) \subset W^{h',q}_c \,.
  \end{displaymath}
\end{cor}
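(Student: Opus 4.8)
The plan is to prove the displayed inclusion by chaining three results already at hand: the boundedness of $K_c$ on $\S_0$ for every shifted graded norm (Proposition~\ref{prop:K_c_scale}), the comparison between graded and standard Sobolev norms (Proposition~\ref{prop:Sobolev}), and the density of $\S_0$ (Proposition~\ref{prop:density-s_0}). I would first work with $\alpha\in\Omega_{[a,b]}\cap\S_0$, so that $K_c\alpha\in\S_0\subset\S$ and every manipulation below is legitimate, and establish the a priori bound
\[
\|(K_c)_{[a',b']}\alpha\|_{W^{h',q}_c}\leq C\,\|\alpha\|_{W^{h,p}_c},
\]
then extend by continuity.

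To bound the target, I note that $(K_c)_{[a',b']}\alpha\in\Omega_{[a',b']}\cap\S$ and $a'\leq b'\leq a'+h'$, so Proposition~\ref{prop:Sobolev}(ii), applied with the interval $[a',b']$, degree $h'$, and the given $\mu=b-a'<Q$, yields
\[
C\,\|(K_c)_{[a',b']}\alpha\|_{W^{h',q}_c}\leq\sum_{m=a'+\mu}^{\,b'+\mu+h'}\|(K_c)_{[a',b']}\alpha\|_{|\nabla|,N-m,p}.
\]
The one point needing care is the arithmetic of the summation range: one has $a'+\mu=b$, and using $h=h'+(b'-a')+(b-a)$ one computes $b'+\mu+h'=(b'+h'-a')+b=(h-(b-a))+b=a+h$, so the sum runs exactly over $m=b,\dots,a+h$. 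Then, for each such $m$, keeping only the weight components in $[a',b']$ does not increase the graded norm, and Proposition~\ref{prop:K_c_scale} gives $\|(K_c)_{[a',b']}\alpha\|_{|\nabla|,N-m,p}\leq\|K_c\alpha\|_{|\nabla|,N-m,p}\leq C_m\,\|\alpha\|_{|\nabla|,N-m,p}$; since only finitely many $m$ occur, the $C_m$ are absorbed into a single constant.

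To control the source, I would use that $\alpha\in\Omega_{[a,b]}$ and invoke the lower bound of Proposition~\ref{prop:Sobolev}(i) with degree $h$:
\[
\sum_{m=b}^{a+h}\|\alpha\|_{|\nabla|,N-m,p}\leq\sum_{k=0}^{h}\||\nabla|^k\alpha\|_p\asymp\|\alpha\|_{W^{h,p}_c},
\]
the last step being \eqref{eq:5}, valid since $1<p<\infty$. Concatenating the three displays yields the a priori estimate on $\Omega_{[a,b]}\cap\S_0$. Finally, by Proposition~\ref{prop:density-s_0}, $\S_0$ is dense in $W^{h,p}_c$; applying the bounded, weight-preserving projection onto the weight components in $[a,b]$ to an approximating sequence shows $\Omega_{[a,b]}\cap\S_0$ is dense in $\Omega_{[a,b]}\cap W^{h,p}_c$, and since $W^{h',q}_c$ is complete, the linear operator $(K_c)_{[a',b']}$, bounded on this dense subspace, extends uniquely and continuously, giving the claimed inclusion. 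The hard part, such as it is, is not analytic but combinatorial: making the two summation ranges produced by Proposition~\ref{prop:Sobolev}(i) and~(ii) coincide, which is precisely what the relations $\mu=b-a'$ and $h=h'+(b'-a')+(b-a)$ are tailored to achieve.
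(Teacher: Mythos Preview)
Your proof is correct and follows essentially the same route as the paper's: apply Proposition~\ref{prop:Sobolev}(ii) on the target side, rewrite the summation range via $a'+\mu=b$ and $b'+\mu+h'=a+h$, invoke Proposition~\ref{prop:K_c_scale}, then close with Proposition~\ref{prop:Sobolev}(i) on the source. You add welcome detail the paper leaves implicit---the passage from $\|(K_c)_{[a',b']}\alpha\|$ to $\|K_c\alpha\|$ and the density-plus-completeness extension step---but the architecture is identical.
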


\begin{proof}
  Let $\alpha \in \Omega_{[a,b]}\cap \S_0$. Apply first
  Proposition~\ref{prop:Sobolev} (ii) to $(K_c)_{[a',b']}(\alpha)$
  with $a'$, $b'$, $\mu$ and $h'$ as above
  \begin{align*}
    C\|(K_c)_{[a',b']} (\alpha)\|_{W^{h',q}_c}
    & \leq \sum_{m= a' +
      \mu}^{b'+\mu+ h'} \|K_c
      \alpha\|_{|\nabla|, N-m,p} = \sum_{m= b}^{a+h} \|K_c
      \alpha\|_{|\nabla|, N-m,p} \\
    & \leq C'  \sum_{m= b}^{a+h} \|\alpha\|_{|\nabla|, N-m,p} \ \text{ by
      Proposition~\ref{prop:K_c_scale},}\\
    & \leq C'' \|\alpha \|_{W^{h,p}_c} 
  \end{align*}
  by Proposition~\ref{prop:Sobolev} (i) since
  $\alpha \in \Omega_{[a,b]}$.
\end{proof}

\begin{rem}
  \label{rem:K_c_formes_lisses}
  The statement becomes simpler when forms are smooth, meaning in
  $W^{+\infty,p}$, as those coming from $\ell^p $-cocycles on
  $G$. Namely in that case, one can let $h, h'$ go to $+\infty$ with
  $b'=Q$, $a=0$. Then $K_c$ integrates smooth forms in $W^{+\infty,p}$
  and of weight lower than $b$, into smooth forms whose components of
  weight larger than $a'$ lies in $W^{+\infty,q}$. Note that the
  $(p,q)$ gap is only determined by the weight gap $\mu = b-a'$ using
  Sobolev rule.
\end{rem}

\section{Proof of Theorem \ref{1}(i)}

Fix a left-invariant Riemannian metric $\mu$ on $G$. Let $\nabla^\mu$
denote its Levi-Civita connection. Let $\nabla^\mathfrak{g}$ denote
the connection which makes left-invariant vectorfields parallel. Since
$\nabla^\mu-\nabla^\mathfrak{g}$ is left-invariant, hence
$\nabla^\mathfrak{g}$-parallel, higher covariant derivatives computed
using either $\nabla^\mathfrak{g}$ or $\nabla^\mu$ determine each
other via bounded expressions. Therefore the Riemannian Sobolev space
$W^{h,p}$ can be alternatively defined using $\nabla^\mathfrak{g}$,
i.e. using derivatives along left-invariant vectorfields. Since every
left-invariant vectorfield is a combination of compositions of at most
$r$ horizontal derivatives $X_i$, where $r$ is the step of
$\mathfrak{g}$,
\begin{eqnarray}\label{eq:8}
  W^{h,p}\subset W_c^{h,p}\subset W^{h/r,p}.
\end{eqnarray} 

According to Proposition~\ref{prop:leray}, it is sufficient to find
some large integer $h$ such that every (usual) closed differential
form $\alpha \in W^{h,p}\Omega^k(G)$ writes $\alpha = d \beta$ with
$\beta \in W^{H,q}\Omega^{k-1}(G)$ for $H=n^{n+1}+1$. The relevant
value of $h$ will arise from the proof.

We first retract $\alpha$ in the sub-complex $(E,d)$ using the
differential homotopy $\Pi_E$. Indeed from
Theorem~\ref{thm:dc-complex},
\begin{displaymath}
  \alpha = \Pi_E \alpha + d R\alpha + R d \alpha = \Pi_E \alpha + d R
  \alpha \,,
\end{displaymath}
where $\Pi_E$ and $R$ are left invariant differential operators of
(horizontal) order at most $Q$, the homogeneous dimension of
$\mathfrak{g}$.  Then $\Pi_E \alpha$, $R\alpha$ and $dR \alpha$ all
belong to $W^{h-Q,p}_c$, so that $\alpha$ and $\Pi_E \alpha$ are
homotopic in $L^{p,p}$, and \emph{a fortiori} $L^{q,p}$ Sobolev
cohomology for $q\geq p$ (see Remark \ref{rem:decreasing}).

We now deal with $\alpha_E= \Pi_E \alpha$. Its algebraic projection on
$E_0$, $\alpha_c = \Pi_{E_0} \alpha_E$ is a contracted $d_c$-closed
differential form in $W_c^{h-Q,p}$ too. Since
$\alpha_c \in E_0^k \simeq H^k(\mathfrak{g})$, the weights of its
components belong to the interval $[a,b]$ where $a=w_{min} (k)$ and
$b=w_{max} (k)$. Moreover, since $K_c \alpha_c \in E_0^{k-1}$, it
belongs to $\Omega_{[a',b']}$ with $a' = w_{min}(k-1)$ and
$b' = w_{max}(k-1)$.

We can now apply Corollary~\ref{cor:K_c_Sobolev}, with
$$
\mu = b- a' = w_{max}(k) - w_{min}(k-1) = \delta N_{max}(k)
$$ 
and $h' = h-Q + a-b + a' -b'$. (Observe that $\mu < Q$ except for
$G=\R$ in which case $L^1$ one forms have bounded primitives.) We get
that $\beta_c = K_c \alpha_c \in W^{h',q}_c$ with
$d_c \beta_c = \alpha_c$ and
$\frac{1}{p}-\frac{1}{q}=\frac{\delta N_{max}(k)}{Q}$.

Finally, let $ \beta_E=\Pi_E \beta_c$. Then $\beta_E\in W_c^{h'-Q,q}$
since $\Pi_E$ is a differential operator of order at most $Q$. By
construction, $\alpha_E=d\beta_E$. By inclusions \eqref{eq:8},
$\beta_E$ belongs to the Riemannian Sobolev space
$W^{(h'-Q)/r,q}$. Thus let us choose $h=rH + 2Q+b-a+b'-a'$. We have
shown that every $k$-form in $W^{h,p}$ is homotopic in $L^{p,p}$
Sobolev cohomology to a form that has a primitive in $W^{H,q}$, where
\begin{displaymath}
  \frac{1}{p}-\frac{1}{q}=\frac{\mu}{Q}= \frac{\delta
    N_{max}(k)}{Q}. 
\end{displaymath}
Proposition~\ref{prop:leray} implies that
$\ell^{q,p}H^k(G)=0$. \emph{A fortiori}, $\ell^{q',p}H^k(G)=0$ for all
$q'\geq q$.

\subsection{Remarks on the weight gaps}
\label{sec:remarks-weight-gaps}

Implicit in the statement of Theorem~\ref{1} is that for any
$1\leq k \leq n $, the weight gap
$\delta N(k) = w_{max}(k)- w_{min}(k-1)$ is positive. Actually, one
has
\begin{equation}
  \label{eq:9}
  w_{max}(k) - w_{max}(k-1) \geq 1 \quad  \mathrm{and}\quad  w_{min}(k) -
  w_{min}(k-1)  \geq 1 .
\end{equation}

\begin{proof}
  Since $E_0 = \ker d_0 \cap \ker \delta_0$ is Hodge-$*$ symmetric
  with $w(*\alpha) = Q - w (\alpha)$ (see e.g.
  \cite{Rumin_CRAS_1999}), one can restrain to prove the statement
  about $w_{min}$. Let $\alpha \in H^{k}(\mathfrak{g})$ be non zero
  with minimal weight $w_{min}(k)$. See it as a left invariant
  retracted form in $E_0^k$. Following Section~\ref{thm:dc-complex},
  one has then $d \alpha = d_0 \alpha = d_c \alpha = 0$. Now
  $(E_0,d_c)$ being locally exact (homotopic to de Rham complex), one
  has $\alpha = d_c \beta$ for some $\beta \in E_0^{k-1}$. Since $d_c$
  increases the weight by $1$ at least (as $d_0=0$ on it), $\beta$ has
  a non vanishing component of weight $< w_{min}(k)$, whence
  $w_{min}(k-1) \leq w_{min}(k) -1$.
  
\end{proof}

Another remark is about the use of the contracted complex here. As
observed in Remark~\ref{rem:1}, de Rham complex being C-C elliptic
too, one could directly use a similar homotopy $K$ for it in the
previous proof. But then, it would lead to a weaker integration result
of closed $W_c^p$ forms in $W_c^q$ for a larger gap
$\frac{1}{p} - \frac{1}{q} = \frac{\delta N}{Q}$. Indeed this
$\delta N $ is the maximal weight gap between the whole $\Omega^k(G)$
and $\Omega^{k-1}(G)$, instead of the restricted ones. This gives a
weaker vanishing condition of $\ell^{q,p} H^{k}(G)$. The point here is
that the first homotopy $\Pi_E$ to $E$, being differential, ``costs a
lot of derivatives'' that would be a shame locally, but is harmless
here when working with the quite smooth $W_c^{h,p}$ forms coming from
our $\ell^p$ simplicial cocycles. Indeed, we have seen that
$\Pi_E = 1$ in $L^{p,p}$ Sobolev cohomology.

\smallskip

Still about these ideas of ``loosing'' or ``gaining'' derivatives,
things go exactly in the opposite direction as usual here. Namely, one
sees in the proof that the more derivatives the homotopy $K_c$
controls at some place, the larger becomes the $(p,q)$ gap from
Hardy-Littlewood-Sobolev inequality in
Corollary~\ref{cor:K_c_Sobolev}. This is of course better in local
problems since $L^q_{loc}$ gets smaller, but is weaker on global
smooth forms as $W_c^q$ gets larger. In this large scale low frequency
integration problem, the less derivatives you gain is the better. No
gain, no pain.

\section{Nonvanishing result}
\label{nonvanishing}

As we will see, in order to prove the non-vanishing of the
$\ell^{q,p}$ and Sobolev $L^{q,p}$ cohomology of $G$ for some
$1\leq p \leq q\leq +\infty$, we shall construct closed $k$-forms
$\omega\in W^{h,p}(G)$, $h$ large enough, and $n-k$-forms $\omega'_j$
such that $\|d\omega'_j\|_{q'}$ tend to zero whereas
$\int_{G}\omega\wedge\omega'_j$ stays bounded away from $0$.  Here
$q'$ is the dual exponent of $q$ : $\frac{1}{q}+\frac{1}{q'}=1$.

The building blocks will be differential forms which are homogeneous
under dilations $\delta_t$. In this section, one can use any expanding
one-parameter group $s\mapsto h_s$ of automorphisms of $G$. Expanding
means that the derivation $D$ generating $h_s$ has positive
eigenvalues.  The one-parameter group $s\mapsto \delta_{e^s}$ is an
example, but others may be useful, see below.

\subsection{Homogeneous differential forms}
\label{sec:homog-diff-forms}

Fix an expanding one-parameter group $s\mapsto h_s$ of automorphisms
of $G$, generated by a derivation $D$. The data $(G,(h_s))$ is called
a \emph{homogeneous Lie group}. Denote by $T=trace(D)$ its
\emph{homogeneous dimension}.

Left-invariant differential forms on $G$ split into weights $w$ under
$h_s$.

Say a smooth differential form $\omega$ on $G\setminus\{1\}$ is
homogeneous of degree $\lambda$ if $h_s^*\omega=e^{s\lambda} \omega$
for all $s\in\R$.  Note that homogeneity is preserved by $d$. A
left-invariant differential form of weight $w$ is homogeneous of
degree $w$

Let $\rho$ denote a continuous function on $G$ which is homogeneous of
degree $1$, and smooth and positive away from the origin. Let $\beta$
be a nonzero continuous differential form which is homogeneous of
degree $\lambda$, smooth away from the origin and has weight $w$, then
\begin{eqnarray*}
  \beta=\rho^{\lambda-w}\sum a_i\theta_i ,
\end{eqnarray*}
where $\theta_i$'s are left-invariant of weight $w$ and $a_i$ are
smooth homogeneous functions of degree $0$, hence are
bounded. Therefore
\begin{displaymath}
  \beta\in L^p(\{\rho\geq 1\}) \iff
  \int_{1}^{+\infty}\rho^{p(\lambda-w)}\rho^{T-1}\, d\rho<\infty
  \iff \lambda-w +\frac{T}{p}<0.
\end{displaymath}
It follows that if $\gamma$ is a differential form which is
homogeneous of degree $\lambda$ and has weight $\geq w$,
\begin{equation}
  \label{eq:10}
  \lambda-w +\frac{T}{p}<0 \implies \gamma\in L^p(\{\rho\geq 1\}).
\end{equation}

Start with a closed differential $k$-form $\omega$ which is
homogeneous of degree $\lambda$ and of weight $\geq w$ (and no
better). Pick a differential $n-k$-form $\alpha'$ which is homogeneous
of degree $\lambda'$. Assume that $d \alpha'$ has weight $\geq w'$
(and no better). Set
\begin{eqnarray*}
  \omega'_j=\chi_j \alpha',
\end{eqnarray*}
where $\chi_j=\chi\circ\rho\circ h_{-j}$ and $\chi$ is a cut-off
supported on $[1,2]$. The top degree form $\omega\wedge \alpha'$ is
homogeneous of degree $\lambda+\lambda'$ and has weight $T$. It
belongs to $L^1$ if $\lambda+\lambda'<0$.  Thus, in order that
$\int\omega\wedge \omega'_j$ does not tend to $0$, it is necessary
that $\lambda+\lambda'\geq 0$.

By construction,
\begin{eqnarray*}
  \omega'_j=e^{\lambda'j} h_{-j}^*\omega'_1.
\end{eqnarray*}
Let $\beta$ denote a component of $d\omega'_1$ of weight $\tilde w$,
and $s=-j$. By the change of variable formula,
\begin{align*}
  \| h_s^*\beta\|_{q'}^{q'}
  &= \int | h_s^*\beta|^{q'}\,dvol\\
  &= \int e^{q'{\tilde w}s}|\beta|^{q'}\circ h_s\,dvol\\
  &=  e^{(q'{\tilde w}-T)s}\int |\beta|^{q'}\,dvol\\
  &= e^{(q'{\tilde w}-T)s}\|\beta\|_{q'}^{q'}.
\end{align*}
This works as well for $q'=\infty$. Since $d\omega'_1$ has weight
$\geq w'$,
\begin{eqnarray*}
  \|\omega'_j\|_{q'}\leq e^{(\lambda'-w'+\frac{T}{q'})j}\|\omega'_1\|_{q'}.
\end{eqnarray*}
One concludes that
\begin{eqnarray*}
  \|\omega'_j\|_{q'}\to 0 \iff \lambda'-w'+\frac{T}{q'}<0.
\end{eqnarray*}
It holds for $q'=\infty$ as well. Remember that
$\omega\in L^p \iff \lambda-w+\frac{T}{p}<0$. Note that
\begin{eqnarray*}
  \lambda-w+\frac{T}{p}+\lambda'-w'+\frac{T}{q'}
  =\lambda+\lambda'-w-w'+T+T(\frac{1}{p}-\frac{1}{q}).
\end{eqnarray*}
Finally, one sees that one can pick $\lambda$ and $\lambda'$ such that
$\omega\in L^p$, $\|\omega'_j\|_{q'}\to 0$ and
$\lambda+\lambda'\geq 0$ iff
\begin{equation}
  \label{eq:11}
  \frac{1}{p}-\frac{1}{q}<\frac{w+w'-T}{T}.
\end{equation}

\subsection{A numerical invariant of homogeneous groups}

A lower bound for the sum $w+w'$ appearing in above inequation
\eqref{eq:11} is provided by the following definitions.

Let $\Sigma$ denote the level set $\{\rho=1\}$. It is a smooth compact
hypersurface, transverse to the vectorfield $\xi$ which generates the
1-parameter group $s\mapsto h_{e^s}$. Differential $k$-forms which are
homogeneous of degree $\lambda$ on $G\setminus\{1\}$ correspond to
smooth sections of the pull-back of the bundle $\Lambda^k T^* G$ by
the injection $\Sigma\hookrightarrow G$. Given such a section
$\sigma$, a form $\alpha$ is defined as follows. At a point $x$ where
$\rho(x)=e^s$, $\alpha(x)= h_{-s}^*\sigma$. Conversely, given a
homogeneous form $\alpha$, consider its values $\sigma$ along $\Sigma$
(not to be confused with the restriction of $\alpha$, which belongs to
$\Lambda^k T^* \Sigma$). A similar construction applies to contracted
forms as $E_0$ is stable by dilations.

On spaces of homogeneous forms of complementary degrees $k$ and $n-k$
and complementary degrees of homogeneity $\lambda$ and $-\lambda$,
define a pairing as follows: if $\beta$ and $\beta'$ are homogeneous
of degrees $\lambda$ and $-\lambda$, set
$$
I(\beta,\beta')=\int_{\Sigma}\iota_\xi(\beta\wedge\beta').
$$
This is a nondegenerate pairing. Indeed, pointwise, an $n$-form
$\omega$ is determined by the restriction of $\iota_\xi(\omega)$ to
$T\Sigma$, hence the pointwise pairing
$(\beta,\beta')\mapsto \iota_\xi(\beta\wedge\beta')_{|T\Sigma}$ is
nondegenerate. For instance, one has
$\beta \wedge * \beta = \|\beta\|^2 \,d vol$ pointwise. Note that the
$n$-form $\beta\wedge\beta'$ is homogeneous of degree $0$,
i.e. dilation invariant. The $n-1$-form $\iota_\xi(\beta\wedge\beta')$
is closed, the integral $I(\beta,\beta')$ only depends on the
cohomology class of this form. The boundary of any smooth bounded
domain containing the origin can be used to perform integration
instead of $\Sigma$.


\begin{dfi}
  \label{dfi:wsG}
  Let $G$ be a homogeneous Lie group of homogeneous dimension $T$. For
  $k=1,\ldots,n= \mathrm{dim}(G)$, define $ws_G(k)$ as the maximum of
  sums $w+w'-T$ such that for a dense set of real numbers $\lambda$,
  there exist
  \begin{enumerate}
  \item a differential form $\alpha$ of degree $k-1$ on
    $G\setminus\{1\}$, homogeneous of degree $\lambda$, such that
    $d \alpha$ has weight $\geq w$,
  \item a differential form $\alpha'$ of degree $n-k$ on
    $G\setminus\{1\}$, homogeneous of degree $\lambda'=-\lambda$, such
    that $d \alpha'$ has weight $\geq w'$,
  \end{enumerate}
  such that $ I(d \alpha,\alpha')\not=0$.
\end{dfi}
Note that for all $k\geq 1$, $ws_G(k)=ws_G(n-k+1)$. For instance, when
Carnot dilations are used, nonzero $1$-forms of weight $\geq 2$ are
never closed, and $n$-forms are always closed and of weight $T$, hence
$ws_G(1)=ws_G(n)=1$.

\subsection{Cohomology nonvanishing}

\begin{thm}
  \label{thm:nonzero}
  Let $G$ be a homogeneous Lie group of homogeneous dimension
  $T$. Then $\ell^{q,p}H^k(G)\not=0$ provided
  \begin{eqnarray*}
    1\leq p,q<+\infty \quad \text{and} \quad
    \frac{1}{p}-\frac{1}{q}<\frac{ws_G(k)}{T}. 
  \end{eqnarray*}
\end{thm}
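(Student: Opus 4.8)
The plan is to combine the analytic construction of Section~\ref{sec:homog-diff-forms} with the definition of $ws_G(k)$, then pass from the Sobolev $L^{q,p}$ statement to the $\ell^{q,p}$ statement via Theorem~\ref{prop:leray}. First I would fix $k$ and pick, using the defining property of $ws_G(k)$, a dense set of exponents $\lambda$ together with homogeneous forms $\alpha$ of degree $k-1$ and $\alpha'$ of degree $n-k$, with $d\alpha$ of weight $\geq w$ and $d\alpha'$ of weight $\geq w'$, such that $w+w'-T = ws_G(k)$ and $I(d\alpha,\alpha')\neq 0$. Set $\omega = d\alpha$; it is a closed homogeneous $k$-form of degree $\lambda$ and weight $\geq w$. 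The computation \eqref{eq:10}--\eqref{eq:11} shows that, because $\frac1p - \frac1q < \frac{w+w'-T}{T} = \frac{ws_G(k)}{T}$, one can choose $\lambda$ in that dense set so that simultaneously $\omega \in L^p(\{\rho\geq 1\})$, the test forms $\omega'_j = \chi_j\alpha'$ satisfy $\|d\omega'_j\|_{q'}\to 0$, and $\lambda+\lambda'\geq 0$, which guarantees that $\int_G \omega\wedge\omega'_j$ does not tend to $0$ (indeed one checks, via $\omega\wedge d\omega'_j = d(\omega\wedge\omega'_j)$ up to sign and Stokes, that $\int_G \omega\wedge\omega'_j = \pm I(d\alpha,\alpha')$ for all large $j$, hence is a nonzero constant).

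Next I would upgrade $\omega$ near the origin: since it is only homogeneous and smooth away from $1$, I replace it by $\psi\,\omega$ where $\psi$ is a cutoff equal to $1$ on $\{\rho\geq 1\}$ and vanishing near the origin, at the cost of adding an exact, compactly supported (hence $W^{h,p}$ for every $h$) correction; alternatively one argues directly that the relevant class is represented by a genuinely smooth form in $W^{h,p}$ for the large $h$ required by Theorem~\ref{prop:leray}. The point is that $\omega$ represents a nonzero class in the Sobolev $L^{q,p}$ cohomology $L^{q,p}_{h',h}H^k(G)$: if $\omega = d\beta$ with $\beta\in W^{h',q}$, then the pairing $\int_G \omega\wedge\omega'_j = \int_G d\beta\wedge\omega'_j = \pm\int_G \beta\wedge d\omega'_j$ would be bounded by $\|\beta\|_q\|d\omega'_j\|_{q'}\to 0$ (using $1\leq q<\infty$ so $q'>1$ and Hölder, with the integration by parts justified because $\omega'_j$ is compactly supported), contradicting that the pairing is a nonzero constant. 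Hence $L^{q,p}_{h',h}H^k(G)\neq 0$, and by Theorem~\ref{prop:leray} the simplicial $\ell^{q,p}H^k(G)\neq 0$ as well.

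The main obstacle I anticipate is the justification that the pairing $\int_G\omega\wedge\omega'_j$ is genuinely constant (equal to $\pm I(d\alpha,\alpha')$) and nonzero for all large $j$, rather than merely bounded away from $0$: this needs the homogeneity-of-degree-$0$ observation that $\iota_\xi(\omega\wedge\alpha')$ is a closed $(n-1)$-form whose integral over any boundary of a bounded domain around the origin equals $I(d\alpha,\alpha')$, combined with a careful Stokes argument on the annular region where $\chi_j$ transits, checking that no boundary contribution at infinity or at the origin intervenes (the origin is harmless once $\lambda+\lambda'\geq 0$ makes $\omega\wedge\alpha'$ locally integrable there, or once one has cut $\omega$ off near $1$). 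A secondary technical point is ensuring the chosen representative of $\omega$ is smooth enough — in $W^{h,p}$ with $h > n^{n+1}$ — so that Theorem~\ref{prop:leray} applies; this follows because away from the origin $\omega$ is $C^\infty$ and homogeneous, so all its covariant derivatives decay at the homogeneous rate dictated by $\lambda$, and the choice of $\lambda$ with $\lambda - w + T/p < 0$ forces every derivative into $L^p$ on $\{\rho\geq 1\}$, while the modification near the origin is compactly supported and smooth.
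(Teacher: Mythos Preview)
Your proposal is essentially the paper's own argument: choose $\lambda$ in the dense set so that \eqref{eq:10} and \eqref{eq:13} hold, exhibit the closed homogeneous form $\omega=d\alpha$, test against $\omega'_j=\chi_j\alpha'$, and derive a contradiction via H\"older and Stokes, then invoke Theorem~\ref{prop:leray}. One small correction: replacing $\omega$ by $\psi\,\omega$ does \emph{not} give a closed form (since $d(\psi\,\omega)=d\psi\wedge\omega$), so the right move---and what the paper does---is to smooth the primitive $\alpha$ near the origin and set $\omega=d(\text{smoothed }\alpha)$; your ``alternatively'' clause is the correct route. Also, the constancy of $\int_G\omega\wedge\omega'_j$ comes out more directly than your Stokes-on-annuli sketch: since $\lambda'=-\lambda$ in Definition~\ref{dfi:wsG}, the form $d\alpha\wedge\alpha'$ is homogeneous of degree $0$, so $d\alpha\wedge\chi_j\alpha'=h_{-j}^*(d\alpha\wedge\chi_1\alpha')$ and the integral is literally independent of $j$, equal to $I(d\alpha,\alpha')\int_\R\chi(t)\,dt$.
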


\begin{proof}
  By assumption,
  $\epsilon=\frac{w+w'-T}{T}-\frac{1}{p}+\frac{1}{q}>0$. Pick a real
  number $\lambda$ in the dense set given in Definition~\ref{dfi:wsG}
  and such that
  \begin{equation}
    \label{eq:12}
    w-\frac{T}{p}-\frac{T}{2}\epsilon \leq \lambda < w-\frac{T}{p} .
  \end{equation}
  Then $\lambda'=-\lambda$ satisfies
  \begin{align}
    \lambda'-w'+\frac{T}{q'}
    & = -\lambda - w' + \frac{T}{q'} \nonumber \\
    & \leq  -w + \frac{T}{p} + \frac{T\epsilon}{2}
      -w' +   \frac{T}{q'}
      = - \frac{T\epsilon}{2} < 0
      . \label{eq:13}
  \end{align}
  By definition, there exist differential $k-1$ and $n-k$-forms
  $\alpha$ and $\alpha'$, homogeneous of degrees $\lambda$ and
  $\lambda'$, such that $d \alpha$ and $d\alpha'$ have weights
  $\geq w$ and $\geq w'$. Then $d\alpha \wedge \alpha'$ is homogeneous
  of degree $0$. Using the notations of
  Section~\ref{sec:homog-diff-forms}, for all $j$,
  $d \alpha\wedge \chi_j\alpha'= h_{-j}^*(d \alpha\wedge \chi_1
  \alpha')$, hence
  $$
  \int_{G} d\alpha \wedge \chi_j\alpha'=\int_{G} d\alpha \wedge
  \chi_1\alpha'=I(d\alpha ,\alpha')\int_{\R}\chi(t)\,dt\not=0.
  $$
  Since $d \alpha$ is homogeneous of degree $\lambda$ and has weight
  $\geq w$, it belongs to $L^p$ (away from a neighborhood of the
  origin) by \eqref{eq:10} and \eqref{eq:12}. Furthermore, derivatives
  along left invariant vector fields decrease homogeneity. Hence all
  such derivatives of $d\alpha$ belong to $L^p$. After smoothing
  $\alpha$ near the origin, we get a closed form $\omega$ on $G$ that
  coincides with $d\alpha$ on $\{\rho \geq 1\}$, and which belongs to
  $W^{h,p}$ for all $h$. Set
  \begin{eqnarray*}
    \omega'_j=\chi_j \alpha'.
  \end{eqnarray*}
  Then $\int_{G}\omega\wedge \omega'_j$ does not depend on $j$.

  Assume by contradiction that $\omega=d\phi$ where $\phi\in W^{h,q}$.
  In particular, $\phi\in L^q$. Since $\omega'_j$ are compactly
  supported, Stokes theorem applies and
  \begin{eqnarray*}
    |\int_{G}\omega\wedge \omega'_j|
    &=&|\int_{G}d\phi\wedge \omega'_j|\\
    &=&|\int_{G}\phi\wedge d\omega'_j|\\
    &\leq&\|\phi\|_{q}\|d\omega'_j\|_{q'}
  \end{eqnarray*}
  which tends to $0$, as $\alpha'$ and
  $d \alpha' \in L^q(\{\rho \geq 1\}$) by \eqref{eq:10} and
  \eqref{eq:13}, contradiction. We conclude that $[\omega]\not=0$ in
  the Sobolev $L^{q,p}$ cohomology of $G$.  According to Proposition
  \ref{prop:leray}, this implies that the $\ell^{q,p}$ cohomology of
  $G$ does not vanish.
\end{proof}

\subsection{Lower bounds on $ws_G$}

We give here two lower bounds on $ws_G$. Combined with Theorem
\ref{thm:nonzero}, they complete the proof of Theorem \ref{1}(ii) in
the wider setting of homogeneous groups. We start with a lemma on the
contracted complex.

\begin{lem}
  \label{lem:existence_formes}
  Let $G$ be a homogeneous Lie group of dimension $n$, let
  $k=1,\ldots,n$.  Then for an open dense set of real numbers
  $\lambda$, there exist smooth non $d_c$-closed contracted
  $k-1$-forms on $G \setminus \{1\}$ which are homogeneous of degree
  $\lambda$.
\end{lem}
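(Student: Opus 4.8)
The plan is to build a nonzero homogeneous contracted form of degree $k-1$ that cannot be $d_c$-closed, by exploiting the weight gaps in $H^*(\mathfrak g)$ exactly as in the proof of \eqref{eq:9}. First I would fix a nonzero element $\eta \in H^{k-1}(\mathfrak g) \simeq E_0^{k-1}$, regarded as a left-invariant contracted form of some fixed weight $w$; left-invariant forms are homogeneous of degree equal to their weight under Carnot dilations, and more generally under any expanding $(h_s)$ they decompose into a finite sum of $D$-homogeneous pieces, so after replacing $\eta$ by one such piece we may assume $\eta$ is $h_s$-homogeneous of some degree $\lambda_0$. Now multiply by a power of the homogeneity-degree-one function $\rho$: for any real $\lambda$, the form $\alpha_\lambda = \rho^{\lambda - \lambda_0}\,\eta$ is a smooth contracted form on $G\setminus\{1\}$ (contracted forms are $\delta_t$-stable, and $\rho^{s}$ is smooth and positive off the origin) which is homogeneous of degree $\lambda$.

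The key point is to check $d_c \alpha_\lambda \neq 0$ for $\lambda$ outside a discrete set. Since $d_c$ raises weight by at least $1$ (as $d_0 = 0$ on $E_0$), the "leading" weight-$(w+1)$ component of $d_c \alpha_\lambda$ is governed by the algebraic part: writing $d_c = \sum_{i\geq 1} (d_c)_i$ with $(d_c)_i$ raising weight by $i$, the term $(d_c)_1$ acts essentially as a first-order horizontal differential operator on the coefficient $\rho^{\lambda-\lambda_0}$ times the algebraic contribution. Because $\eta$ is a nontrivial Lie algebra cohomology class, if $(d_c)_1 \alpha_\lambda$ vanished identically it would force a relation making $\eta$ algebraically special in a way that constrains $\lambda$ to finitely many (or a discrete set of) values — concretely, $d_c(\rho^s \eta)$ expands via a Leibniz rule as $\rho^s\big(s\,\rho^{-1} d\rho \wedge (\text{contraction of } \eta) + \rho\, d_c\text{-type terms}\big)$ and the bracketed expression is a polynomial in $s$ of bounded degree with coefficients built from fixed smooth homogeneous forms; it vanishes for at most finitely many $s$ unless it vanishes identically, and identical vanishing is precluded because restricting to the left-invariant direction recovers $d_0 \eta = 0$ combined with $\eta \notin \im d_0$, i.e. $\eta$ is not killed by the relevant operator for generic $s$. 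Thus the set of bad $\lambda$ is closed with empty interior, and its complement is open and dense, as required. The same argument goes through verbatim for a general expanding $(h_s)$ after the initial decomposition into $D$-homogeneous components, since all that is used is that $\rho$ has homogeneity degree $1$ and that $E_0$ is dilation-stable.

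The main obstacle I anticipate is making precise the claim that $d_c(\rho^s\eta)$ does not vanish for generic $s$: unlike the locally exact argument for \eqref{eq:9}, here I cannot simply invoke local exactness of $(E_0, d_c)$, since $\alpha_\lambda$ is only defined on $G\setminus\{1\}$ and has a prescribed homogeneous singularity. The honest route is to track the top-weight part of $d_c \alpha_\lambda$ and show it is a nonzero polynomial in $s$ with values in a fixed finite-dimensional space of homogeneous forms, using that $d_c$ restricted to this leading order is, up to a nonzero scalar, the operator $s \mapsto \iota$-type contraction paired against $d\rho$, which is injective on the nonzero class $[\eta]$ for $s$ away from its finitely many roots. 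Everything else — smoothness off the origin, homogeneity bookkeeping, the reduction from left-invariant to $h_s$-homogeneous pieces — is routine.
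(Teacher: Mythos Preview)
Your approach differs from the paper's, and the difference matters because the step you flag as ``the main obstacle'' is in fact a genuine gap. You fix a left-invariant $\eta\in E_0^{k-1}$ and multiply by $\rho^{\,\lambda-\lambda_0}$; the crux is then to show $d_c(\rho^s\eta)\not\equiv 0$ for generic $s$. Your justification --- that identical vanishing ``is precluded because restricting to the left-invariant direction recovers $d_0\eta=0$ combined with $\eta\notin\im d_0$'' --- does not do this. Knowing $\eta\notin\im d_0$ says nothing about whether the differential operator $f\mapsto d_c(f\eta)$ annihilates the one-parameter family $f=\rho^s$. Concretely, $s\mapsto d_c(\rho^s\eta)(x_0)$ is a polynomial in $s$, but to conclude it is not the zero polynomial you would need to exhibit \emph{some} $s$ (or some function in the span of the $\rho^s$) on which the operator is nonzero, and powers of a single scalar function $\rho$ form a thin family: a nonzero differential operator can certainly kill all of them at a given point. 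The Leibniz-type expansion you sketch for $(d_c)_1$ is also not available in general, since $d_c$ need not be first order and does not satisfy a Leibniz rule with respect to multiplication by functions.

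The paper sidesteps exactly this difficulty. Rather than committing in advance to $\rho^s$ and a fixed left-invariant $\eta$, it first uses that $(E_0,d_c)$ is a resolution to know $d_c\not\equiv 0$ on $E_0^{k-1}$, then invokes Stone--Weierstrass to produce a \emph{polynomial} $P$ and an invariant $\alpha_0\in E_0^{k-1}$ with $d_c(P\alpha_0)(x_0)\neq 0$ at some point where $P(x_0)>0$. Only then does it vary the exponent: the map $\lambda\mapsto d_c(P^\lambda\alpha_0)(x_0)$ is analytic (since $d_c$ is a differential operator) and nonzero at $\lambda=1$, hence nonzero off a discrete set. A cutoff $\chi$, homogeneous of degree $0$ and supported in $\{P>0\}$, then makes $\chi P^\lambda\alpha_0$ smooth on $G\setminus\{1\}$ and homogeneous of degree $\lambda\, w(P)+w(\alpha_0)$. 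The point is that the nonvanishing input is secured \emph{before} the analytic-continuation step, so no claim about a specific $(\eta,\rho)$ pair is needed. If you want to rescue your route, you would need an argument that for \emph{some} choice of left-invariant $\eta$ the operator $f\mapsto d_c(f\eta)$ is nonzero and then that it does not annihilate all $\rho^s$; the cleanest way to do the first half is precisely the paper's density argument, at which point replacing $\rho$ by the polynomial $P$ that witnesses nonvanishing is both natural and free.
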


\begin{proof}
  
  The differential $d_c \not\equiv 0$ on $E_0^{k-1}$, since the
  complex $(d_c, E_0^*)$ is a resolution on $G$.  Then, by the
  Stone-Weierstrass approximation theorem, their exist non
  $d_c$-closed contracted forms with homogeneous polynomial components
  in an invariant basis. Pick one term $P \alpha_0$ with
  $\alpha_0\in E_0^{k-1}$ invariant, and a non constant homogeneous
  polynomial $P$ such that $d_c (P\alpha_0)\not=0$. Up to changing $P$
  into $-P$, pick $x_0 \in G$ such that $d_c(P\alpha_0) (x_0) \not=0$
  and $P(x_0) >0$. Consider the map
  \begin{displaymath}
    F \ :\ \lambda \in \C \mapsto d_c (P^\lambda \alpha_0) (x_0) . 
  \end{displaymath}
  Since $d_c$ is a differential operator, $F$ is analytic. Since
  $F(1) \not=0$, one has $F(\lambda) \not= 0$ except for a set of
  isolated values of $\lambda$. Let $\chi$ be a smooth homogeneous
  function on $G \setminus \{1\}$ of degree $0$ with support in
  $\{P> 0\}$ and $\chi=1$ around $x_0$. Then
  $\alpha = \chi P^\lambda \alpha_0$ is a smooth non $d_c$-closed
  homogeneous contracted form on $G\setminus \{1\}$ of degree
  $w(\alpha)= \lambda w(P) + w(\alpha_0)$.

\end{proof}

\begin{pro}
  \label{lower}
  Let $G$ be a homogeneous Lie group of dimension $n$. For all
  $k=1,\ldots,n$,
  \begin{displaymath}
    ws_G(k)\geq \max\{1, w_{min}(k)-w_{max}(k-1)\}.
  \end{displaymath}
\end{pro}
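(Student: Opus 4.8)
The plan is to read off the numerical invariant $ws_G(k)$ directly from Definition~\ref{dfi:wsG} by choosing the differential forms $\alpha$, $\alpha'$ to be essentially the homogeneous \emph{contracted} forms produced by Lemma~\ref{lem:existence_formes}, so that $d\alpha = d_c\alpha$ picks up exactly the weight $w_{min}(k)$ while $d\alpha'$ picks up the weight $w_{min}(n-k+1) = Q - w_{max}(k-1)$ via Hodge-$*$ duality. Concretely, to get the bound $ws_G(k)\geq w_{min}(k)-w_{max}(k-1)$, I would proceed as follows. First, apply Lemma~\ref{lem:existence_formes} to the degree $k-1$: for $\lambda$ in an open dense set there is a smooth non $d_c$-closed contracted $(k-1)$-form $\alpha$ on $G\setminus\{1\}$ homogeneous of degree $\lambda$; since $d_c$ raises weight by at least $1$ and lands in $E_0^k \simeq H^k(\mathfrak{g})$, the form $d\alpha = d_c\alpha$ (using $d_0=0$ on $E_0$, so $d$ and $d_c$ agree up to the exact term, and in fact on $E_0$ the relevant component of $d\alpha$ is $d_c\alpha$) has a nonzero component of weight $\geq w_{min}(k)$. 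Dually, apply Lemma~\ref{lem:existence_formes} to degree $n-k+1$ (so to get an $(n-k)$-form $\alpha'$), or equivalently Hodge-star a contracted form: there is a homogeneous $(n-k)$-form $\alpha'$ of degree $\lambda'=-\lambda$ with $d\alpha'$ of weight $\geq w_{min}(n-k+1) = Q - w_{max}(k-1)$. Then $w + w' - T \geq w_{min}(k) + Q - w_{max}(k-1) - Q = w_{min}(k) - w_{max}(k-1)$.

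The remaining point, and the one requiring care, is the non-degeneracy condition $I(d\alpha,\alpha')\neq 0$ in Definition~\ref{dfi:wsG}. The pairing $I$ on homogeneous forms of complementary degree and complementary homogeneity is non-degenerate (this was established in the paragraph preceding Definition~\ref{dfi:wsG}, using that an $n$-form is determined by $\iota_\xi$ of it restricted to $T\Sigma$, and e.g.\ $\beta\wedge *\beta = \|\beta\|^2\,dvol$). So once $d\alpha$ is a nonzero closed $k$-form homogeneous of degree $\lambda$, there exists \emph{some} homogeneous $(n-k)$-form of degree $-\lambda$ pairing nontrivially with it; the issue is to arrange simultaneously that this partner form can be taken to be $\alpha'$ with $d\alpha'$ of weight $\geq Q - w_{max}(k-1)$. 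Here I would use the freedom in Lemma~\ref{lem:existence_formes}: for an open dense set of $\lambda$ we have a whole family of candidate $\alpha'$ (e.g.\ $\chi P'^{-\lambda/w(P')}\beta_0$ for various invariant $\beta_0 \in E_0^{n-k}$ of the right weight and various homogeneous degree-$0$ cutoffs $\chi$), and since $I$ is non-degenerate pointwise, one can localize near a point $x_0$ where $d\alpha(x_0)\neq 0$ and choose the cutoff and the invariant seed so that the pointwise pairing $\iota_\xi(d\alpha\wedge\alpha')_{|T\Sigma}$ is nonzero there and supported in a small neighborhood, forcing $I(d\alpha,\alpha')\neq 0$. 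The analyticity-in-$\lambda$ argument from Lemma~\ref{lem:existence_formes} then shows that for $\lambda$ outside an isolated set the relevant quantity stays nonzero, so the condition holds for $\lambda$ in a dense set, as required.

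For the other half of the $\max$, namely $ws_G(k)\geq 1$, this is simpler: it is enough to exhibit any single pair $(\alpha,\alpha')$ achieving $w + w' - T \geq 1$. Take $\alpha'$ an invariant volume form multiplied by a suitable homogeneous degree-$0$ function so that $d\alpha'$ is the trivial candidate of weight $\geq T$ handled as $ws_G(n)=1$ in the $n$-form case, or more directly use the observation recorded after Definition~\ref{dfi:wsG} that $ws_G(1)=ws_G(n)=1$ together with the general lower bound $\delta N_{\min} \geq 1$; in degree $k$ one can always find $\alpha$ with $d\alpha$ of weight $\geq w_{min}(k) \geq k$ and $\alpha'$ with $d\alpha'$ of weight $\geq n-k+1$ in the worst case $\mathbb{R}^n$, and in general the argument of Lemma~\ref{lem:existence_formes} plus $d_c$ raising weight by at least $1$ guarantees $w + w' - T \geq 1$. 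The main obstacle in the whole argument is the coordination of the two forms so that $d\alpha'$ genuinely attains weight $\geq Q - w_{max}(k-1)$ \emph{while} still pairing nontrivially against $d\alpha$; everything else is a routine application of Lemma~\ref{lem:existence_formes}, Hodge-$*$ duality of $E_0$, and the non-degeneracy of $I$.
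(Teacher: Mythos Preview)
Your proposal has a genuine gap at the central step. You assert that for a contracted form $\alpha\in E_0^{k-1}$ one has ``$d\alpha=d_c\alpha$'' (or at least that the de Rham differential $d\alpha$ has weight $\geq w_{min}(k)$). This is false: on $E_0$ one only knows $d_0\alpha=0$, so $d\alpha=d_1\alpha+d_2\alpha+\cdots$ has weight $\geq w(\alpha)+1\geq w_{min}(k-1)+1$, but $d\alpha$ does \emph{not} lie in $E_0^k$ in general, and its lowest weight need not reach $w_{min}(k)$. Concretely, on Engel's group with Carnot weights take $\alpha=f\,\theta_X\in E_0^1$: then $d\alpha=df\wedge\theta_X$ has the component $(Yf)\,\theta_Y\wedge\theta_X$ of weight $2$, whereas $w_{min}(2)=3$. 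Since Definition~\ref{dfi:wsG} is phrased for the de Rham differential, your choice of $\alpha$ does not produce the weight $w_{min}(k)$ you need, and the same defect affects your $\alpha'$.

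The paper's proof repairs exactly this point by pushing both forms from $E_0$ into $E$ via $\Pi_E$: with $\alpha_E=\Pi_E\alpha$ one has $d\alpha_E=\Pi_E(d_c\alpha)$, and since $\Pi_E=\Pi_{E_0}+(\text{weight-increasing})$, the lowest weight of $d\alpha_E$ equals that of $d_c\alpha\in E_0^k$, hence is $\geq w_{min}(k)$. Moreover the paper does not invoke Lemma~\ref{lem:existence_formes} a second time for $\alpha'$ (which, as you note, leaves the pairing condition hanging); instead it \emph{builds} $\alpha'$ from $\alpha$ by Hodge duality, $\alpha'=\rho^{-2\lambda+2w-T}\,*(d_c\alpha)_w\in E_0^{n-k}$, which forces $I(d_c\alpha,\alpha')\neq 0$ via $\beta\wedge*\beta=\|\beta\|^2\,dvol$. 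The pairing then survives the passage to $\alpha_E,\alpha'_E$ because the correction terms have total weight $>T$ and so vanish as $n$-forms. Finally the bound $ws_G(k)\geq 1$ does not require a separate construction: since $d$ strictly increases weight on $E$, one gets $w(d\alpha'_E)\geq w(\alpha')+1=T-w+1$, whence $w(d\alpha_E)+w(d\alpha'_E)-T\geq 1$ automatically.
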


\begin{proof}

  By Lemma~\ref{lem:existence_formes}, pick a non $d_c$-closed
  contracted $k-1$-form $\alpha$, homogeneous of degree
  $\lambda$. Assume that $d_c\alpha$ has weight $\geq w$ and no better
  (i.e. its weight $w$ component $(d_c \alpha)_w$ does not vanish
  identically).  Pick a smooth contracted $n-k$-form $\alpha'$ of
  weight $T-w$, homogeneous of degree $-\lambda$ and such that
  $I(d_c\alpha,\alpha')\not=0$.  For instance
  $\alpha'= \rho^{-2\lambda+ 2w -T} * (d_c\alpha)_w$ will do. Set
  $\alpha_E = \Pi_E \alpha$ and $\alpha_E'= \Pi_E \alpha'$.

  By construction (see Theorem~\ref{thm:dc-complex}),
  $\Pi_E = \Pi_{E_0}+ D$ where $D$ strictly increases the
  weight. Hence $d\alpha_E - d_c \alpha= \Pi_E d_c \alpha - d_c\alpha$
  has weight $\geq w +1$, and $\alpha'_E - \alpha'$ has weight
  $\geq T-w + 1$. Therefore
  $d\alpha_E \wedge \alpha'_E - d_c \alpha \wedge \alpha'$ has weight
  $\geq T+1$, thus vanishes. Then it holds that
  \begin{displaymath}
    I(d\alpha_E, \alpha'_E) = I(d_c \alpha, \alpha') \not=0 \,.
  \end{displaymath}
  Consider now the weight of $d\alpha'_E$. By construction,
  $E \subset \ker d_0$, so that $d$ strictly increases the weight on
  $E$, see Section~\ref{sec:dc_complex}. Therefore
  $$
  w(d\alpha'_E) \geq w(\alpha'_E) + 1 = w(\alpha')+ 1 = T - w(d_c
  \alpha) + 1 = T -w(d \alpha_E) + 1,
  $$
  hence $ws_G(k) \geq w(d \alpha_E) + w (d \alpha'_E) - T \geq 1 $ as
  needed. One has also that
  $$
  w(d\alpha'_E) = w (d_c \alpha') \geq w_{min}(n-k+1) = T -
  w_{max}(k-1),
  $$
  by Hodge $*$-duality, see proof of \eqref{eq:9}, while
  $$
  w(d\alpha_E) = w (d_c \alpha) \geq w_{min}(k) \,.
  $$
  This gives
  $ws_G(k) \geq w(d \alpha_E) + w (d \alpha'_E) - T \geq w_{min}(k) -
  w_{max}(k-1)$.

\end{proof}


\subsection{An example: Engel's group}
\label{sec:engels-group}

We illustrate the non-vanishing results on the Engel group $E^4$.

It has a 4-dimensional Lie algebra with basis $X,Y,Z,T$ and nonzero
brackets $[X,Y]=Z$ and $[X,Z]=T$. One finds, see
e.g. \cite[Section~2.3]{Rumin_TSG}, that
$$
H^1(\mathfrak{g}) \simeq \mathrm{span}(\theta_X, \theta_Y) \
\mathrm{and} \ H^2(\mathfrak{g}) \simeq \mathrm{span}(\theta_X \wedge
\theta_Z, \theta_Y \wedge \theta_Z) \,.
$$
The following table gives the values of $\delta N_{max}$ and
$\delta N_{min}$ for $E^4$ with respect to its standard Carnot weight
: $w(X)= w(Y) =1$, $w(Z)= 2$ and $w(T) = 3$. One has $Q=7$ and

\begin{center}
  \begin{tabular}{|l|c|c|c|c|}
    \hline
    k   & 1 & 2 & 3 & 4 \\
    \hline\hline
    $w_{max}(k)$ & 1 & 4 & 6 & 7 \\
    \hline
    $w_{min}(k)$ & 1 & 3 & 6 & 7 \\
    \hline
    $\delta N_{max}(k)$   & 1 & 3 & 3 & 1 \\
    \hline
    $\delta N_{min}(k)$   & 1 & 2 & 2 & 1 \\
    \hline
  \end{tabular}
\end{center}

We see that Theorem \ref{1} is sharp in degrees $1$ and $4$. However,
there are gaps in degrees $2$ and $3$. In particular, $H^{2,q,p}(E^4)$
vanishes when $\frac{1}{p}- \frac{1}{q} \geq \frac{3}{7}$ and does not
when $\frac{1}{p}- \frac{1}{q} < \frac{2}{7}$, provided
$1<p, q< +\infty$.

\smallskip

Following \cite[Section~4.2]{Rumin_TSG}, let us also use the expanding
one-parameter group of automorphisms of $E^4$ generated by the
derivation $D$ defined by
\begin{eqnarray*}
  D(X)=X ,\quad D(Y)=2Y ,\quad D(Z)=3Z ,\quad D(T)= 4T.
\end{eqnarray*}
Then $trace(D)=10$, and with this choice of derivation, the table of
weights becomes

\begin{center}
  \begin{tabular}{|l|c|c|c|c|}
    \hline
    k   & 1 & 2 & 3 & 4 \\
    \hline\hline
    $w_{max}(k)$ & 2 & 5 & 9 & 10 \\
    \hline
    $w_{min}(k)$ & 1 & 5 & 8 & 10 \\
    \hline
    $\delta N_{max}(k)$   & 2 & 4 & 4 & 2 \\
    \hline
    $\delta N_{min}(k)$   & 1 & 3 & 3 & 1 \\
    \hline
  \end{tabular}
\end{center}

According to Proposition \ref{lower}, with respect to this homogeneous
structure, $ws_{E^4}(2)\geq \delta N_{min}(2)=3$. Then with Theorem
\ref{thm:nonzero},
\begin{eqnarray*}
  1\leq p,q<+\infty \quad \text{and} \quad \frac{1}{p}-\frac{1}{q}<\frac{3}{10}
\end{eqnarray*}
implies that $\ell^{q,p}H^2(E^4)\not= \{0\}$. We see that a non-Carnot
homogeneous structure may yield a better interval for cohomology
nonvanishing, which is intriguing for a large scale geometric
invariant.

\bibliographystyle{abbrv}



\def\cprime{$'$}

-----------------------------------------------------

\end{document}